\definecolor{luh-dark-blue}{rgb}{0.0, 0.313, 0.608}
\newtheorem{satz}{Proposition}[section]
\newtheorem{lem}[satz]{Lemma} 
\newtheorem{remark}[satz]{Remark}
\newtheorem{thm}[satz]{Theorem}
\newtheorem*{mainthm}{Main Theorem}
\newcommand{\chookrightarrow}{\mathrel{\lhook\joinrel\relbar\kern-.8ex\joinrel\lhook\joinrel\rightarrow}}
\newcommand{\R}{\mathbb{R}}
\newcommand{\N}{\mathbb{N}}		   
\newcommand{\Z}{\mathbb{Z}}				
\newcommand{\C}{\mathbb{C}}	
\newcommand{\T}{\mathbb{T}}
\newcommand{\e}{\varepsilon}
\DeclareMathOperator{\Ima}{Im}
\DeclareMathOperator{\supp}{supp}
\DeclareMathOperator{\esssup}{ess-sup}
\numberwithin{equation}{section}
	\title[Symmetry of periodic traveling waves]{Symmetry of periodic traveling waves for nonlocal dispersive equations}
\author{Gabriele Bruell}
\address{Institute for Analysis, Karlsruher Institute of Technology (KIT), D-76128 Karlsruhe, Germany}
\email{gabriele.bruell@kit.edu}
\author{Long Pei}
\address{School of Mathematics (Zhuhai), Sun Yat-sen University, 519082 Zhuhai, China}
\email{peilong@mail.sysu.edu.cn}
\thanks{Date: \today }
\begin{document}

\maketitle
\allowdisplaybreaks

\begin{abstract}
Of concern is the  \emph{a priori} symmetry of traveling wave solutions for a general class of nonlocal dispersive equations
\[
u_t + (u^2 +Lu)_x=0,
\]
where $L$ is a Fourier multiplier operator with symbol $m$. Our analysis includes both homogeneous and inhomogeneous symbols. We characterize a class of symbols $m$ guaranteeing that periodic traveling wave solutions are symmetric under a mild assumption on the wave profile. Particularly, instead of considering  waves with a unique crest and trough per period or a monotone structure near troughs as classically imposed in the water wave problem, we formulate a \emph{reflection criterion}, which allows to affirm the symmetry of periodic traveling waves. The reflection criterion weakens the assumption of monotonicity between trough and crest and enables to treat \emph{a priori} solutions with multiple crests of different sizes per period. Moreover, our result not only applies to smooth solutions, but also to traveling waves with a non-smooth structure such as peaks or cusps at a crest. The proof relies on a so-called \emph{touching lemma}, which is related to a strong maximum principle for elliptic operators, and a weak form of the celebrated \emph{method of moving planes}. 

\end{abstract}

\section{Introduction}
The present manuscript is devoted to symmetry of periodic traveling wave solutions for general nonlocal dispersive equations of the form\footnote{We use the notation $\hat f$ to denote the Fourier transformation of a function $f$.}
\begin{equation}\label{eq:Equation}
u_t+(u^2+Lu)_x=0, \qquad \widehat{Lu}(t,k)=m(k)\hat u(t,k),
\end{equation}
where $t>0$  and $x\in \R$ denote the time and space variables, respectively. The linear operator $L$ is a Fourier multiplier operator with real symbol $m$. For certain classes of symbols $m$ and a mild \emph{a priori} assumption on the wave profile, which we call \emph{reflection criterion}, we show that periodic traveling solutions of \eqref{eq:Equation} are symmetric and have exactly one crest per period. Our study includes smooth solutions as well as the so-called \emph{highest waves} exhibiting a cusp or corner singularity at their crests.

\medskip

Our main motivation for studying the symmetric structure for periodic solutions of \eqref{eq:Equation} steams from the full water wave problem governed by the Euler equations in two dimensions. For the water wave problem the existence and symmetric structure of steady, periodic waves in various settings has been subject of intense study during the last decades. Classical existence results for periodic traveling waves assume that the wave profile is symmetric
 \cite{Gerstner1809, Dubreil-Jacotin, Hur2001, Constantin-Strauss2002, Constantin-Strauss2004}.  However, this is not necessarily a restriction as many studies concerned with {a priori} symmetry of traveling waves show. 
The first result in the context of irrotational flows goes back to Garabedian \cite{Garabedian} in 1965, who proved that if every stream line obeys a \emph{monotone profile}, which means that it has a unique maximum and minimum per period, all of them located on a vertical line, respectively, then the periodic steady wave is symmetric. The proof has been simplified in \cite{Toland1998} in the 90s and the authors of \cite{OS} proved that in fact any steady periodic solution for irrotational flows is symmetric under the condition that (only) the surface wave profile is monotone. Under the same condition on the wave profile, it is shown in  \cite{ Constantin-Escher2004b,Constantin-Escher2004} that steady waves are symmetric  in the context of flows with vorticity. In the same setting, the authors of \cite{Matioc2013} replaced the assumption of a monotone wave profile by the condition that all  streamlines achieve their global minimum on the same vertical line and are monotone in a small (one side) neighborhood. Then they prove   that the wave is symmetric and has actually a monotone profile. Notice that {a priori} the wave profile is allowed to have multiple crests per period. 
A further result for waves which may have {a priori} several crests within a period was established in \cite{Hur2007}. Under the assumption that the wave is monotone near a unique global trough in each period and every stream line attains a minimum below this trough, it is shown that the wave is symmetric and has a single crest per period. The above mentioned results concern flows without stagnation. 
In fact, it was shown in \cite{EEW} that there exist rotational flows with critical layers having multiple crests of different sizes within one period.



\medskip

 It naturally raises the question whether a local monotone structure, either near a global trough or between a trough and a crest, is indispensable to guarantee the symmetry of steady solutions in water wave problems. The answer is negative for \emph{solitary water waves} as indicated in \cite{Craig} for irrational flows and \cite{Hs} for waves with vorticity (without stagnation). The precise decay of solitary waves enables the application of the so-called \emph{method of moving planes}, which goes back to Alexandrov \cite{Alk} and was refined by the works of Serrin \cite{Serr}, Gidas, Ni, and Nirenberg \cite{GNN} and many others. Concerning water wave model equations in the regime of shallow water, there are results confirming the symmetry of solitary waves irrespective of the precise decay rate, see for instance \cite{BEP16,Pei-DP} for the Whitham and Degasperis--Procesi equation, respectively, and  \cite{A} for a class of general dispersive equations.
%

\medskip

\medskip

The situation is more complicated for periodic waves in the context of the water wave problem and model equations of the form \eqref{eq:Equation}. Due to the lack of decay at infinity, a monotone structure, either near a global trough or from it to a global crest,  is required in previous results  in order to initiate the method of moving planes. 
We relax the assumption on a monotone structure as far as possible, still guaranteeing that the method of moving planes is directly applicable. To this end, we introduce the following criterion:
\begin{description}\label{cond: reflection criterion} 
\item[Reflection criterion]
A $2\pi$-periodic continuous function  $\phi$ is said to satisfy the \emph{reflection criterion} if  there exists $\lambda_*\in [0,2\pi)$ such that
\[
	\phi(x)>\phi(2\lambda_*-x)\qquad \mbox{for all}\qquad x\in (\lambda_*, \lambda_*+ \pi).
\]
\end{description}
This criterion is weaker than the classical \emph{monotone profile} assumption, and does not impose a monotone structure at any particular point on the wave profile so that it can be used to confirm the symmetry of periodic waves with arbitrarily many crests and troughs per period. 

\medskip 

Classically, the method of moving planes strongly relies on an elliptic maximum principle for local equations. Dealing with a genuinely nonlocal equation like \eqref{eq:Equation}, an in-depth study of the nonlocal operator is required in order to apply the method of moving planes. We impose assumptions on the symbol of the nonlocal operator $L$, which corresponds to the dispersion relation of \eqref{eq:Equation}, guaranteeing that the action of $L$ reflects a weak form of an elliptic maximum principle. We call it the \emph{touching lemma} (cf. Lemma \ref{touching lemma} below).
In our analysis, we impose the following assumption on the symbol $m$: 

\begin{description}
\item[Assumption] The symbol $m$ is even, real and satisfies one of the following conditions:
 \begin{itemize}
 \item[(S)] $m\in S^r(\Z)$ for some $r<0$  is inhomogeneous, and the  sequence $(n_k)_{k\in \N}$ defined by $n_k:=m(|\sqrt{k}|)$  is completely monotone;
 \end{itemize}
 or
 \begin{itemize}
 \item[(H)] $m$ is homogeneous of degree $r<0$, that is $m(k)\eqsim |k|^r$.
 \end{itemize}
\end{description}
 The precise definitions of a completely monotone sequence and the symbol class $S^r(\Z)$ are given in Section~\ref{S:2}. 
 The assumptions on $m$ include dispersive equations with weak dispersion, where the symbol of the Fourier multiplier operator can be either inhomogeneous or homogeneous. They are inspired by the fact that for a large class of evolution equations, solutions which are symmetric at any instant of time are in fact traveling, if the  symbol $m$ is real and even \cite{BEGP,  Ehrn-Holden-Ray}. This fact uncovers the close connection between symmetry and steadiness for waves from the opposite perspective.  
 Examples of well-known water wave model equations falling into the fame of \eqref{eq:Equation} and satisfying assumption (S) or (H), are for instance the Whitham equation, the Burgers--Hilbert equation or the reduced Ostrovsky equation.

 \medskip
 
 Let us turn to our main result. If $u$ is a traveling wave solution of \eqref{eq:Equation}, then $u(t,x)=\phi(x-ct)$, where $c>0$ denotes the wave propagation speed and $\phi$ solves the steady equation
 \begin{equation}\label{eq:steady}
 -c\phi + \frac{1}{2}\phi^2 + L\phi =B
 \end{equation}
 for some  constant of integration $B\in \R$. 
 The present work is not devoted to the existence of solutions of \eqref{eq:steady}, but rather to the symmetry of solutions whenever they exist. Results on the existence of solutions of \eqref{eq:steady} in the periodic setting 
 can be found for instance in \cite{BD, EK, EW} and references therein.
  Our main result reads:
  
   \begin{mainthm}[Symmetry of periodic steady waves]
  Assume that the symbol $m$ of the linear Fourier multiplier operator $L$ in \eqref{eq:Equation} satisfies either assumption \emph{(S)} or \emph{(H)}, and let $\phi$ be a $2\pi$-periodic\footnote{The assumption of $2\pi$-periodic solutions can be replaced by any finite period.}, continuous solution of \eqref{eq:steady}. 
  If $\phi$ satisfies the reflection criterion and one of the following  
  \begin{enumerate}
     \item $\phi <\frac{c}{2}$ ,
    \item 
    $\phi \leq \frac{c}{2}$ and $\phi(x)=\frac{c}{2}$ for a unique $x\in [-\pi,\pi)$
    \end{enumerate}
  holds, then $\phi$ is symmetric and has exactly one crest per period. Moreover,
  \[
  \phi^\prime(x)>0\qquad \mbox{for all}\qquad x\in (-\pi,0),
  \]
  after a suitable translation.
    \end{mainthm}
    
	The proof of the main theorem is given in Theorem \ref{thm:Snew} and Theorem \ref{thm:S}.
   As detailed in the paper, if $\phi<\frac{c}{2}$, then $\phi$ is a smooth solution of \eqref{eq:steady} and we do not need any restriction on the amount and magnitude of its crests (as long as the refection criterion is satisfied). However, if $\phi=\frac{c}{2}$ at some point, then $\phi$ may exhibit a singularity in the form of a cusp or peak, cf. \cite{BD,EW} so that $\phi$ looses its smoothness property. Such a cusp or corner singularity can be compared to a stagnation point on the surface for solutions of the water wave problem, similar as the appearance of a peak in the extremal Stokes wave. As for the Euler equations, this causes difficulties. In our case, we may overcome this problem by the additional assumption that such a singularity occurs at most once per period. Here, we also would like to emphasize that even for solitary solutions as studied in \cite{BEP16}, the proof for the symmetry result fails for the highest wave, unless it is assumed that the highest crest is unique.
   
   \medskip

The proof of the above theorem relies on a weak form of the  method of moving planes, which we apply in a periodic setting to a general nonlocal equation. 
Formally, the action of the nonlocal operator $L$ can be expressed as a convolution with a kernel function $K$, given by a Fourier series with coefficients $(m(k))_{k\in \Z}$.
We show that under condition (S) or (H) the periodic kernel function $K$ can be expressed, as an analog of Bernsteins theorem, in terms of integrals involving Theta or trigonometric functions over measures, respectively.  This result  guarantees that  $K$ is even, integrable and decreasing on a half-period, which forms the foundation for a so-called touching lemma and a boundary point lemma for \eqref{eq:Equation}. Those can be viewed as weak nonlocal counterparts of the maximum principle and Hopf's boundary point lemma for elliptic equations, respectively.

\medskip

We conclude the introduction with the organization of this paper. In Section \ref{S:2} we study the action of the nonlocal Fourier multiplier operator $L$. In particular, we show that if the symbol $m$ satisfies condition (S) or (H), then the action of $L$  corresponds to a convolution operator with an
 even, real and periodic, integrable kernel function $K$. Moreover, $K$ is monotonically decaying on a half-period. Section \ref{S:T} is concerned with the symmetry of traveling wave solutions of \eqref{eq:Equation}. We first study the regularity of periodic steady solutions to \eqref{eq:steady} and then prove a touching lemma and a boundary point lemma. Based on these two lemmas and the structure of smooth or singular wave profiles, we prove the symmetry of regular periodic traveling waves and of the highest periodic wave, respectively. 

\medskip


\section{The action of the nonlocal operator $L$} \label{S:2}
We first introduce some notation. Let $f$ and $g$ be two functions.  We write $f\lesssim g$ ($f\gtrsim g$) if there exists a constant $c>0$ such that $f\leq c g$ ($f\geq cg$). Moreover, we use the notation $f\eqsim g$ whenever $f\lesssim g$ and $f\gtrsim g$. We denote by $\N_0:=\N\cup \{0\}$ the set of natural numbers including zero. 
\subsection{Functional analytic setting}
We denote by $\T:=\R \setminus 2\pi \Z$  the one-dimensional torus, which is identified with $[0,2\pi) \subset \R$. 
 Let  $\mathcal D(\T)=C^\infty(\T)$ and denote by $\mathcal{S}(\Z)$ the space of rapidly decaying functions. Then the (periodic) Fourier transform $\mathcal{F}:\mathcal{D}(\T)\to \mathcal S(\Z)$ is defined by
\[
	(\mathcal{F}f)(k)=\hat f(k):=\frac{1}{{2\pi}}\int_{\T}f(x)e^{-ixk}\,dx.
\]
and any $f\in \mathcal{D}(\T)$ can be written as
\[
f(x)= \sum_{k\in \Z}\hat f(k)e^{ixk}.
\]
By duality, the Fourier transform extends uniquely to $\mathcal{F}:\mathcal{D}^\prime(\T)\to \mathcal S^\prime(\Z)$.
Here, $\mathcal D^\prime(\T)$ and $\mathcal{S}^\prime(\T)$ are the dual spaces of $\mathcal{D}(\T)$ and $\mathcal{S}(\T)$, respectively.
We say a function $f:\T \to \R$ belongs to the space $L^p(\T)$, $1\leq p<\infty$, if and only if
\[
	\|f\|_{L_p}^p:=\int_\T |f|^p(x)\, dx < \infty
\]
and $f\in L^\infty(\T)$ if and only if $\|f\|_\infty:= \esssup_{x\in \T}|f(x)|<\infty$. 
We collect some well-known results on the Fourier transform on $L^p(\T)$ (cf. e.g. \cite[Chapter 3]{G}): If $f\in L^1(\T)$, then the sequence of Fourier coefficients $(\hat f(k))_{k\in \Z}$ is decreasing in $|k|$ with $\lim_{|k|\to \infty}\hat f(k)=0$. If $f,g\in L^1(\T)$ and satisfy $\hat f(k)=\hat g(k)$ for all $k\in \Z$, then $f=g$ almost everywhere. 
If $f,g\in L^2(\T)$, then
\[
\widehat{fg}(k)=\sum_{l\in \Z}\hat f(l)\hat g(k-l)=\sum_{l\in \Z} \hat f(k-l)\hat g(l).
\]


We now introduce the periodic Zygmund spaces on which we perform our  subsequent analysis.
 Let $(\varphi)_{j\geq 0}\subset C_c^\infty(\R)$ be a family of smooth, compactly supported functions satisfying
  \[
 	 \supp \varphi_0 \subset [-2,2],\qquad \supp \varphi_j \subset [-2^{j+1},-2^{j-1}]\cap [2^{j-1},2^{j+1}] \quad\mbox{ for}\quad j\geq 1,
  \]
  \[
 	 \sum_{j\geq 0}\varphi_j(\xi)=1\qquad\mbox{for all}\quad \xi\in\R,
  \]
  and for any $n\in\N$, there exists a constant $c_n>0$ such that 
  \[\sup_{j\geq 0}2^{jn}\|\varphi^{(n)}_j\|_\infty\leq c_n.\]
For $s>0$, the periodic Zygmund space denoted by $ \mathcal{C}^s(\T)$ consists of functions $f$ satisfying
   \[
   	  \|f\|_{\mathcal{C}^s(\T)}:=\sup_{j\geq 0}2^{sj}\left\|\sum_{k\in \Z} e^{ik(\cdot)}  \varphi_j(k)\hat f(k)\right\|_\infty < \infty.
     \]
    
 Eventually, for $\alpha \in (0,1)$, we denote by $C^\alpha(\T)$ the space of $\alpha$-H\"older continuous functions on $\T$. 
  If $k\in \N$ and $\alpha\in (0,1)$, then $C^{k,\alpha}(\T)$ denotes the space of $k$-times continuously differentiable functions whose $k$-th derivative is $\alpha$-H\"older continuous on $\T$. To lighten the notation we write $C^s(\T)=C^{\left \lfloor{s}\right \rfloor, s- \left \lfloor{s}\right \rfloor }(\T)$ for $s\geq 0$. 
As a consequence of Littlewood--Paley theory, we have the relation $\mathcal{C}^s(\T)=C^s(\T)$ for any $s>0$ with $s\notin \N$; that is, the H\"older spaces on the torus are completely characterized by Fourier series. If $s\in \N$, then $C^s(\T)$ is a proper subset of $\mathcal{C}^s(\T)$ and
\[
	C^1(\T)\subsetneq C^{1-}(\T)\subsetneq \mathcal{C}^1(\T).
\]
 Here, $C^{1-}(\T)$ denotes the space of Lipschitz continuous functions on $\T$. For more details we refer to \cite[Chapter 13]{T3}.

\subsection{Fourier multipliers}

A  Fourier multiplier on $\Z$ is a possibly complex valued function that defines a linear operator $L$ via multiplication on the Fourier side, that is
\[
 \widehat{Lf}(k)=m(k)\hat f(k).
\] 
The function $m$ is also called the symbol of the multiplier operator $L$. If $m:\Z \to \C$, let us define the difference operator $\Delta^n$ on $m$ by
\begin{equation}\label{eq:D}
	 \Delta^{n+1} m(k):=\Delta^n m(k+1)-\Delta^nm(k),\qquad n\in \N_0,
\end{equation}
 where $\Delta^0m(k):=m(k)$. Setting $\Delta:=\Delta^1$, we have that $\Delta m(k):=m(k+1)-m(k)$. 
 It is easy to see by induction that
 \[
 \Delta^n m(k)=\sum_{j}^n (-1)^j\binom{n}{j}m(k+n-j).
 \]
 For $r\in \R$ we define the space $S^r(\Z)$ consisting of functions $m:\Z \to \C$ for which
 \[
	 |\Delta^n m(k)|\lesssim_n (1+|k|)^{r-n},\qquad \mbox{for}\quad k\in \Z\quad \mbox{and all}\quad n\in \N_0.
 \]
 If $m\in S^r(\Z)$, we say that $m$ is a symbol of order $r$. The analog definition for functions on the real line states that $m\in S^r(\R)$ if $m\in C^\infty(\R)$ and
 \[
	 |m^{(n)}(\xi)|\lesssim_n(1+|\xi|)^{r-n},\qquad \mbox{for} \quad \xi \in\R \quad \mbox{and all}\quad  n\in \N_0.
 \]
 
 \begin{lem}[\cite{RT}, Lemma 6.2]\label{lem:S}
 If $m\in S^r(\R)$, then the restriction $m|_{\Z}\in S^r(\Z)$.
 \end{lem}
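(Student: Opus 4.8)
The plan is to express the discrete $n$-th difference of $m|_{\Z}$ as an average of the classical $n$-th derivative $m^{(n)}$, and then simply transport the decay estimate defining $S^r(\R)$. The key identity is that for every $n\in\N_0$ and every $k\in\Z$,
\[
\Delta^n m(k)=\int_{[0,1]^n} m^{(n)}\!\bigl(k+t_1+\cdots+t_n\bigr)\,\dd t_1\cdots \dd t_n,
\]
where for $n=0$ the right-hand side is read as $m(k)$. I would prove this by induction on $n$: the case $n=0$ is the definition of $\Delta^0$, the case $n=1$ is the fundamental theorem of calculus applied to $t\mapsto m(k+t)$, and for the inductive step one writes $\Delta^{n+1}m(k)=\Delta^n m(k+1)-\Delta^n m(k)$, substitutes the representation for $\Delta^n$, and applies the fundamental theorem of calculus in the variable that shifts $k$ to $k+1$ underneath the integral sign. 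Differentiation under the integral and the interchange of integrals are justified without any fuss because $m^{(n)}\in C^\infty(\R)$, so every integrand in sight is continuous and the domains are compact.

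Granting the identity, the estimate is immediate. Using $|m^{(n)}(\xi)|\lesssim_n(1+|\xi|)^{r-n}$ for $m\in S^r(\R)$ and the triangle inequality,
\[
|\Delta^n m(k)|\le \int_{[0,1]^n}\bigl|m^{(n)}(k+t_1+\cdots+t_n)\bigr|\,\dd t\;\lesssim_n\;\sup_{\theta\in[0,n]}(1+|k+\theta|)^{r-n}.
\]
It remains to note that $1+|k+\theta|$ and $1+|k|$ are comparable, uniformly in $\theta\in[0,n]$ and $k\in\Z$, with a constant depending only on $n$: one always has $1+|k+\theta|\le(1+n)(1+|k|)$, and $1+|k+\theta|\ge 1+|k|-n\ge(1+n)^{-1}(1+|k|)$ as soon as $|k|\ge n$. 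Hence $(1+|k+\theta|)^{r-n}\lesssim_n(1+|k|)^{r-n}$ (distinguishing the sign of $r-n$, or just raising the two-sided comparison to the power $r-n$), so $|\Delta^n m(k)|\lesssim_n(1+|k|)^{r-n}$ for all $|k|\ge n$. For the finitely many integers $k$ with $|k|<n$ the same bound holds trivially after enlarging the constant, since $m^{(n)}$ is bounded on compact sets. As $n\in\N_0$ was arbitrary, this is precisely the statement that $m|_{\Z}\in S^r(\Z)$.

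I do not anticipate a genuine obstacle here; the argument is elementary. The only places that call for a little care are the bookkeeping in the induction that yields the integral formula for $\Delta^n$ (tracking exactly which variable the fundamental theorem of calculus acts on at each stage) and the uniform, $n$-dependent two-sided comparison $1+|k+\theta|\eqsim 1+|k|$, together with the harmless separate treatment of small $|k|$. Everything else is the triangle inequality and the hypothesis on $m$.
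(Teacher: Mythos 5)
Your proof is correct and is essentially the standard argument: the paper itself gives no proof but cites \cite{RT}, Lemma 6.2, where the result is obtained in the same way, namely by representing $\Delta^n m(k)$ as an iterated integral of $m^{(n)}$ over $[0,1]^n$ and then using the uniform comparability of $1+|k+\theta|$ with $1+|k|$ for $\theta\in[0,n]$. The induction yielding the integral formula, the two-sided comparison, and the separate treatment of the finitely many $|k|<n$ are all handled correctly, so there is nothing to add.
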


 \medskip
 
 We aim to include in our analysis Fourier multiplier operators with symbols in $S^r(\Z)$, which are bounded, as well as operators with homogeneous symbols. The latter are of the form $m(k)\eqsim|k|^{r}$, where $r<0$. The action of a Fourier multiplier operator with homogeneous symbol on a periodic function is only well-defined for functions with zero mean, that is $\hat f(0)=0$ and 
 \[
	Lf(x)=\sum_{k\neq 0}m(k)\hat f(k)e^{ixk}.
 \]

For this reason, the restriction of a certain function space $X$ to its subset of zero mean functions is going to play an important role and we denote it by $X_0$. We now state a classical Fourier multiplier theorem on Zygmund spaces (e.g. \cite[Proposition 13.8.3]{T3}, \cite[Theorem 2.3 (v)]{AB}):

\begin{satz} \label{prop:FM} Let $r\in \R$. If $m\in S^r(\Z)$,
then the Fourier multiplier $L$ defined by
\[
Lf(x)=\sum_{k\in \Z}m(k)\hat f(k)e^{ixk}
\]
belongs to the space $\mathcal{L}\left(\mathcal{C}^{s}(\T),\mathcal{C}^{s-r}(\T)\right)$ for any $s\geq 0$. Similarly, if $m$ is a homogeneous symbol of order $r$, that is $m(k)\eqsim |k|^r$, then 
then the Fourier multiplier $L$ defined by
\[
Lf(x)=\sum_{k\neq 0}|k|^r\hat f(k)e^{ixk}
\]
belongs to the space $\mathcal{L}\left(\mathcal{C}^{s}_0(\T),\mathcal{C}^{s-r}_0(\T)\right)$ for any $s\geq 0$.
\end{satz}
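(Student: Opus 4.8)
The plan is to run a standard Littlewood--Paley argument, reducing the mapping property to an $L^1$-bound on the dyadic pieces of the convolution kernel of $L$. Write $\Delta_j f:=\sum_{k\in\Z}e^{ik(\cdot)}\varphi_j(k)\hat f(k)$ for the $j$-th Littlewood--Paley block, so that $\|f\|_{\mathcal{C}^s(\T)}=\sup_{j\geq0}2^{sj}\|\Delta_j f\|_\infty$ holds by definition, and fix a fattened cut-off $\tilde\varphi_j:=\sum_{|i-j|\leq2}\varphi_i$ (with $\varphi_i:=0$ for $i<0$), which equals $1$ on $\supp\varphi_j$. Since $\widehat{\Delta_j(Lf)}(k)=\varphi_j(k)m(k)\hat f(k)=\varphi_j(k)m(k)\tilde\varphi_j(k)\hat f(k)$, we may write $\Delta_j(Lf)=K_j*(\tilde\Delta_j f)$, where $\tilde\Delta_j f:=\sum_{|i-j|\leq2}\Delta_i f$ and $K_j(x):=\sum_{k\in\Z}e^{ikx}\varphi_j(k)m(k)$ is a trigonometric polynomial. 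Granting the kernel estimate $\|K_j\|_{L^1(\T)}\lesssim2^{rj}$ for all $j\in\N_0$, Young's inequality yields
\[
\|\Delta_j(Lf)\|_\infty\leq\|K_j\|_{L^1(\T)}\,\|\tilde\Delta_j f\|_\infty\lesssim2^{rj}\|\tilde\Delta_j f\|_\infty\lesssim2^{(r-s)j}\|f\|_{\mathcal{C}^s(\T)},
\]
and multiplying by $2^{(s-r)j}$ and passing to the supremum over $j$ gives $\|Lf\|_{\mathcal{C}^{s-r}(\T)}\lesssim\|f\|_{\mathcal{C}^s(\T)}$, which is the first assertion.

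The technical heart --- and the step I expect to be the main obstacle --- is the kernel bound $\|K_j\|_{L^1(\T)}\lesssim2^{rj}$, which I would prove by summation by parts. Put $a_k:=\varphi_j(k)m(k)$. The bounds $\sup_j2^{jn}\|\varphi_j^{(n)}\|_\infty\leq c_n$ give, writing iterated differences as iterated integrals of $\varphi_j^{(n)}$, the discrete estimate $|\Delta^n\varphi_j(k)|\lesssim_n2^{-nj}$; and on $\supp\varphi_j$ one has $|k|\eqsim2^j$, so the hypothesis on $m$ --- either $|\Delta^nm(k)|\lesssim_n(1+|k|)^{r-n}$ under (S), or the explicit form $m(k)=|k|^r$ under (H), whose finite differences obey the same bound away from the origin --- together with the Leibniz rule for $\Delta$ produces $|\Delta^Na_k|\lesssim_N2^{(r-N)j}$, uniformly on $\supp\varphi_j\cap\Z$, a set of cardinality $\eqsim2^j$. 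Applying Abel summation $N$ times to $K_j(x)=\sum_ke^{ikx}a_k$ introduces a factor $(e^{ix}-1)^{-N}$ and replaces $a_k$ by $\Delta^Na_k$; since $|e^{ix}-1|\gtrsim|x|$ on $[-\pi,\pi]$, this yields $|K_j(x)|\lesssim|x|^{-N}2^{(r+1-N)j}$ for $0<|x|\leq\pi$, which, combined with the trivial bound $|K_j(x)|\leq\sum_k|a_k|\lesssim2^{(r+1)j}$, splitting the integral at $|x|=2^{-j}$ and taking $N=2$, gives
\[
\|K_j\|_{L^1(\T)}\lesssim2^{-j}\,2^{(r+1)j}+2^{(r-1)j}\!\int_{2^{-j}}^{\pi}x^{-2}\,dx\lesssim2^{rj}.
\]
For the finitely many small indices $j$ for which $\supp\varphi_j$ reaches the origin or $|k|\not\eqsim2^j$ there, $K_j$ is a fixed finite trigonometric polynomial and the bound is immediate.

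The homogeneous case (H) is handled by the identical argument, after noting that a zero-mean function satisfies $\hat f(0)=0$, so the frequency $k=0$ --- where $|k|^r$ is singular --- never contributes to $Lf$ or to $\Delta_jf$, and that $\widehat{Lf}(0)=0$, so $L$ preserves the zero-mean class; the restriction to $\mathcal{C}^s_0(\T)$ is therefore consistent and the same estimate applies verbatim. Alternatively, since this proposition is an instance of the classical Fourier multiplier theorems on periodic Zygmund spaces, one may simply quote \cite[Proposition 13.8.3]{T3} and \cite[Theorem 2.3 (v)]{AB}, using Lemma~\ref{lem:S} whenever it is convenient to verify the symbol estimates on $\R$ rather than on $\Z$. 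The only genuinely delicate bookkeeping lies in passing from the derivative bounds on $\varphi_j$ to difference bounds on $\Delta^n\varphi_j(k)$ and in tracking constants through the discrete Leibniz rule; the rest is routine.
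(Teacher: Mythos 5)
Your argument is correct, but it is worth pointing out that the paper does not prove this proposition at all: it is quoted as a classical Fourier multiplier theorem on periodic Zygmund spaces, with the proof delegated to \cite[Proposition 13.8.3]{T3} and \cite[Theorem 2.3 (v)]{AB}. What you have written is essentially the standard argument behind those references, carried out in full: the reduction $\Delta_j(Lf)=K_j*(\tilde\Delta_j f)$ via a fattened cut-off, the kernel bound $\|K_j\|_{L^1(\T)}\lesssim 2^{rj}$ by $N$-fold Abel summation combined with the trivial bound and a split of the integral at $|x|=2^{-j}$, and Young's inequality. The individual steps all check out: the discrete bound $|\Delta^n\varphi_j(k)|\lesssim_n 2^{-nj}$ does follow by writing iterated differences as iterated integrals of $\varphi_j^{(n)}$ over $[0,1]^n$; the discrete Leibniz rule (with its shifted arguments, which are harmless since the symbol estimates are uniform under bounded shifts) gives $|\Delta^N(\varphi_j m)(k)|\lesssim_N 2^{(r-N)j}$ on a set of cardinality $\lesssim 2^j$; and $N=2$ suffices in the splitting. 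Your treatment of the homogeneous case is also the right one: since $\hat f(0)=0$ the singular frequency never enters, $\widehat{Lf}(0)=0$ so the zero-mean class is preserved, and the finite differences of $\xi\mapsto\xi^r$ on $[1,\infty)$ satisfy the same estimates as an inhomogeneous symbol of order $r$, so the identical kernel bound applies. Only two cosmetic remarks: with the paper's normalization $\hat f(k)=\frac{1}{2\pi}\int_\T fe^{-ikx}\,dx$ the identity $\Delta_j(Lf)=K_j*(\tilde\Delta_jf)$ acquires a factor $\frac{1}{2\pi}$, which of course changes nothing; and you should say explicitly that the Abel summation has no boundary terms because $a_k$ is finitely supported. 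So the proposal is a correct, self-contained substitute for the citation; what the paper's route buys is brevity, while yours makes the dependence on the difference estimates for $m$ (and hence the role of the class $S^r(\Z)$ and of Lemma~\ref{lem:S}) completely transparent.
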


\medskip

\subsection{Assumptions on the symbol and properties of the convolution kernel} \label{Ss:2}
%
%
%
%
We first recall some results on {completely monotone} sequences (cf. \cite{Guo, Widder}).
 A sequence $(n_k)_{k\in\N_0}$ of real numbers is called \emph{completely monotone} if its elements are nonnegative and
 \[
 	(-1)^n\Delta^nn_k \geq 0\qquad \mbox{for any}\quad n,k\in\N_0,
 \]
 where $\Delta^n$ denotes the difference operator defined in \eqref{eq:D}. In a similar fashion, a smooth function $f:D\subset \R\to \R$ is called \emph{completely monotone} if
 \[
 (-1)^nf^{(n)}(x)\geq 0\qquad \mbox{for any} \quad n\in \N_0, x \in D.
 \]
If $f:\R\setminus\{0\}\to \R$ is even, we say that $f$ is completely monotone if its restriction to the positive half-line $(0,\infty)$ is completely monotone.
 As pointed out in \cite{D}, any nonconstant, completely monotone function on $(0,\infty)$ satisfies the strict inequality $(-1)^nf^{(n)}(x)> 0$. There exists a close relation between completely monotone functions and completely monotone sequences:

 \begin{lem}[\cite{Guo}, Theorem 3 and Theorem 5]\label{lem:CM}
 	Suppose that $f:[0,\infty)\to \R$ is completely monotone, then for any $a\geq 0$ the sequence $(f(an))_{n\in\N_0}$ is completely monotone.
 Conversely, if $(n_k)_{k\in \N}$ is a completely monotone sequence, then there exists a completely monotone interpolation function $f:[1,\infty) \to \R$ such that 
 \[
	 f(k)=n_k\qquad k\in \N.
 \]
 \end{lem}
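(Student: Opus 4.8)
The plan is to deduce both assertions from the discrete Bernstein--Widder theory, that is, from the Hausdorff characterization of completely monotone sequences, treating the two directions separately.

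For the first claim I would argue with finite differences rather than with an integral representation. Since a completely monotone function is in particular $C^\infty$, the classical mean value theorem for $n$-th order finite differences applies to $f$ on $[0,\infty)$: writing $\Delta_{(a)}g(y):=g(y+a)-g(y)$, one has $\Delta_{(a)}^{n}f(y)=a^{n}f^{(n)}(\xi)$ for some $\xi\in(y,y+na)$. As the sequence $n_k:=f(ak)$ satisfies $\Delta^{n}n_k=\Delta_{(a)}^{n}f(ak)$ (immediate induction on $n$), this yields
\[
(-1)^{n}\Delta^{n}n_k=a^{n}\,(-1)^{n}f^{(n)}(\xi_k)\ge 0
\]
by complete monotonicity of $f$, while nonnegativity of the terms $n_k=f(ak)$ is the case $n=0$. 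For $a=0$ the sequence is constant and the claim is trivial. This disposes of the direct implication without any appeal to Hausdorff's theorem.

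For the converse I would invoke the Hausdorff moment theorem: a sequence $(m_j)_{j\ge 0}$ is completely monotone if and only if $m_j=\int_{[0,1]}s^{j}\,d\mu(s)$ for some finite nonnegative Borel measure $\mu$ on $[0,1]$. Applying this to $m_j:=n_{j+1}$ gives $n_k=\int_{[0,1]}s^{k-1}\,d\mu(s)$ for all $k\ge 1$, and the natural candidate for the interpolant is
\[
f(x):=\int_{(0,1]}s^{\,x-1}\,d\mu(s),\qquad x\ge 1,
\]
which visibly satisfies $f(k)=n_k$. Differentiating under the integral sign --- legitimate on compact subsets of $(1,\infty)$, since $s^{x-1}(-\log s)^{n}$ is bounded there --- gives $(-1)^{n}f^{(n)}(x)=\int_{(0,1]}s^{\,x-1}(-\log s)^{n}\,d\mu(s)\ge 0$, so $f$ is completely monotone on $(1,\infty)$.

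The step I expect to be the main obstacle is the behaviour at the left endpoint $x=1$, and correspondingly the precise meaning one attaches to \emph{completely monotone on $[1,\infty)$}. If $\mu$ charges $\{0\}$, or merely has a sufficiently heavy tail as $s\to 0^{+}$, then $\lim_{x\to 1^{+}}f(x)$ may differ from $n_1$, or $f$ may fail to be $C^\infty$ up to $x=1$. I would deal with this either by correcting the single value $n_1$ (which lets one assume the representing measure has no atom at $0$), or --- following Widder's treatment of the half-line --- by reading complete monotonicity at the endpoint as complete monotonicity on the open interval together with right-continuity there. With either convention the construction above works, and the remaining verifications (measurability, the hypotheses of the moment theorem, and the dominated-convergence estimates for the differentiation under the integral) are routine.
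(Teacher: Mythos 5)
The paper does not prove this lemma at all --- it is quoted verbatim from \cite{Guo} --- so there is no in-paper argument to compare yours against; I can only assess your proof on its own terms. Your treatment of the first assertion is correct and complete: the mean value theorem for $n$-th order finite differences, $\Delta_{(a)}^{n}f(y)=a^{n}f^{(n)}(\xi)$ with $\xi\in(y,y+na)$, combined with the identity $\Delta^{n}n_k=\Delta_{(a)}^{n}f(ak)$, transfers the sign condition from $f^{(n)}$ to $\Delta^{n}n_k$, and the case $a=0$ is trivial. This is a clean, elementary route that avoids Hausdorff's theorem entirely.

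The converse is where the real problem sits, and the endpoint issue you flag is not a technicality that either of your proposed fixes can remove: it is a genuine obstruction, and the statement as written fails without an additional hypothesis. Take $n_1=1$ and $n_k=0$ for $k\ge 2$. Then $(-1)^{j}\Delta^{j}n_k\ge 0$ for all $j\ge 0$ and $k\ge 1$ (for $k\ge 2$ every term vanishes, and for $k=1$ only the term $(-1)^{j}\binom{j}{j}n_1$ survives), so the sequence is completely monotone; its Hausdorff measure is $\mu=\delta_0$. Any completely monotone $f$ on $[1,\infty)$ is nonnegative and nonincreasing, so $f(2)=0$ forces $f\equiv 0$ on $[2,\infty)$; by Bernstein's theorem $f$ restricted to the open half-line $(1,\infty)$ is the Laplace transform of a positive measure, so $f(2)=0$ forces that measure, hence $f$, to vanish identically on $(1,\infty)$, and continuity (let alone smoothness) at the endpoint then gives $f(1)=0\neq n_1$. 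No interpolant exists. Your fix (a), adjusting $n_1$, produces a function with $f(1)\neq n_1$, i.e.\ not an interpolant of the given sequence; your fix (b), reading complete monotonicity at $1$ as right-continuity plus complete monotonicity on the open interval, still pins $f(1)=\lim_{x\to 1^{+}}f(x)=\mu((0,1])=n_1-\mu(\{0\})$. What is actually true --- and what the Widder/Guo theorems assert --- is that the interpolant exists if and only if the sequence is \emph{minimal}, i.e.\ the representing measure has no atom at $0$; equivalently, your candidate $f(x)=\int_{(0,1]}s^{x-1}\,d\mu(s)$ always interpolates $n_k$ for $k\ge 2$ and recovers $n_1$ precisely in the minimal case. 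You should therefore either add the minimality hypothesis to the converse, or restrict the interpolation claim to $k\ge 2$; note that the only use the paper makes of this direction (strict positivity and strict decay of nontrivial completely monotone sequences) can be read off directly from the representation in Theorem \ref{thm:B} without any interpolation.
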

 
 As an immediate consequence of Lemma \ref{lem:CM}, we observe that any nontrivial monotone sequence $(n_k)_{k\in \N_0}$ is strictly positive for all $k\in \N_0$ and strictly decreasing for all $k\geq 1$.
 
  \medskip

In what follows we impose the following assumption:

 \begin{description}
 \item[Assumption]
%
The symbol $m$ of the Fourier multiplier operator $L$ is real, even, and satisfies either
 \begin{itemize}
 \item[(S)] $m\in S^r(\Z)$ for some $r<0$  and the  sequence $(n_k)_{k\in \N}$ defined by $n_k:=m(|\sqrt{k}|)$  is completely monotone
 \end{itemize}
 or
 \begin{itemize}
 \item[(H)] $m$ is homogeneous of degree $r<0$, that is $m(k)\eqsim |k|^r$.
 \end{itemize}
\end{description}

\medskip

Under assumption (S) or (H) on the symbol, the equation
\begin{equation}\label{eq:Equation_E}
u_t+(u^2+Lu)_x=0
\end{equation}
covers the following widely studied equations: 
 \begin{enumerate}[a)]
 \item The fractional Korteweg--de Vries equation takes the form \eqref{eq:Equation_E} with
 \[
 	m(k)=|k|^r,\qquad r\in \R.
 \]
 The symbol $m$ is homogeneous and satisfies therefore (H), whenever $r<0$.
 The equation corresponds to the Burgers--Hilbert equation for $r=-1$, and to the reduced Ostrovsky equation for $r=-2$.
 \item The Whitham equation takes the form \eqref{eq:Equation_E} with 
 \[
 	m(k)=\sqrt{\frac{\tanh{k}}{k}}.
 \]
 The symbol $m$ is inhomogeneous an can be viewed as the restriction of $M:\R \to \R$, $M(\xi):=\sqrt{\frac{\tanh{\xi}}{\xi}}$ on $\Z$. The function $M$ belongs to $S^{-\frac{1}{2}}(\R)$ and $\xi \to M(|\sqrt{\xi}|)$ is completely monotone on $(0,\infty)$ as proved in \cite{EW}. Now, Lemma \ref{lem:S} and Lemma \ref{lem:CM} imply that $m$ satisfies assumption (S) for $r=-\frac{1}{2}$.
 \item The inhomogeneous counter part of the fractional Korteweg--de Vries family, takes the form \eqref{eq:Equation_E} with
 \[
 	m(\xi)=(1+k^2)^{\frac{r}{2}},\qquad r\in \R.
 \] 
 The symbol $m$ is inhomogeneous and satisfies (S) for $r<0$. Again, this is a direct consequence of Lemma~\ref{lem:S} and Lemma \ref{lem:CM} and the fact that $n:(0,\infty)\to \R$ defined by $n(\xi)=(1+\xi)^{\frac{r}{2}}$ is completely monotone on $(0,\infty)$ for $r<0$.
 \end{enumerate}
 
 The action of the nonlocal operator $L$ can be expressed as a convolution with a kernel function $K$, which takes the form
 \[
 K(x)=\sum m(k)\cos(xk),
 \]
 and $L\phi = K*\phi$. The sum above is taken over $\Z$ or $\Z\setminus\{0\}$ depending on whether $m$ is an inhomogeneous or a homogeneous symbol, respectively.

%
 
 \medskip
%
%

The following  discrete analog of Bernstein's theorem on completely monotone functions will be used to prove the monotonicity of $K$ on the half-period $(0,\pi)$.
\begin{thm}[\cite{Widder}, Theorem 4a]\label{thm:B}
	A sequence $(n_k)_{k\in\N_0}$ of real numbers is  completely monotone if and only if
	\[
		n_k=\int_0^1 t^k d\sigma(t),
	\]
	where $\sigma$ is nondecreasing and bounded for $t\in[0,1]$.
\end{thm}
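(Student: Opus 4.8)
The plan is to prove both directions; the implication ``$\Leftarrow$'' is a one-line computation, while ``$\Rightarrow$'' is the substantive part (the Hausdorff moment problem). For sufficiency, assume $n_k=\int_0^1 t^k\,d\sigma(t)$ with $\sigma$ nondecreasing and bounded. Writing $\Delta$ for the forward difference in the index $k$, I would show by induction on $n$ that
\[
(-1)^n\Delta^n n_k=\int_0^1 t^k(1-t)^n\,d\sigma(t)\qquad\text{for all }n,k\in\N_0,
\]
the case $n=0$ being the hypothesis and the inductive step using $\Delta^{n+1}n_k=\Delta^n n_{k+1}-\Delta^n n_k$ together with $(t-1)\,t^k(1-t)^n=-t^k(1-t)^{n+1}$. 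Since $t^k(1-t)^n\geq 0$ on $[0,1]$ and $d\sigma\geq 0$, the right-hand side is nonnegative, so $(n_k)$ is completely monotone (and $n_0=\int_0^1 d\sigma(t)<\infty$ uses boundedness of $\sigma$).

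For necessity I would carry out the classical discretization argument underlying the Hausdorff moment problem. Given a completely monotone sequence $(n_k)_{k\in\N_0}$, set for each $N\in\N$
\[
c_{N,k}:=\binom{N}{k}(-1)^{N-k}\Delta^{N-k}n_k,\qquad 0\leq k\leq N,
\]
which are nonnegative precisely by complete monotonicity. Using the shift operator $E=\Id+\Delta$, so that $n_j=(E^j n)_0$, and expanding by the binomial theorem, I would establish for each $j\in\N_0$ and each $N\geq j$ the combinatorial identity
\[
\sum_{k=0}^N\frac{k(k-1)\cdots(k-j+1)}{N(N-1)\cdots(N-j+1)}\,c_{N,k}=n_j
\]
(empty product $=1$ for $j=0$, so $\sum_{k=0}^N c_{N,k}=n_0$). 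Taking $j=0$ shows that the discrete measures $\sigma_N:=\sum_{k=0}^N c_{N,k}\,\delta_{k/N}$ on $[0,1]$ all have total mass $n_0$ and are therefore uniformly bounded; for general $j$, combining with the elementary uniform bound $\bigl|\frac{k(k-1)\cdots(k-j+1)}{N(N-1)\cdots(N-j+1)}-(k/N)^j\bigr|\lesssim_j N^{-1}$ for $0\leq k\leq N$ gives
\[
\int_0^1 t^j\,d\sigma_N(t)=\sum_{k=0}^N (k/N)^j\,c_{N,k}\longrightarrow n_j\qquad\text{as }N\to\infty,\ \text{for every }j\in\N_0.
\]

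To finish I would invoke Helly's selection theorem: the uniformly bounded nondecreasing functions $\sigma_N$ (normalized to vanish at $0$) possess a subsequence converging pointwise, hence weakly, to a nondecreasing bounded function $\sigma$ on $[0,1]$; since $t\mapsto t^j$ is continuous on the compact interval $[0,1]$, passing to the limit along this subsequence in the previous display yields $n_j=\int_0^1 t^j\,d\sigma(t)$ for all $j\in\N_0$, as claimed. The step I expect to be the main obstacle is the verification of the moment identity for $c_{N,k}$ --- keeping precise track of which iterated differences $\Delta^m n_0$ survive after the double binomial expansion --- together with the uniform comparison of the falling factorials with powers that lets one replace the exact moments of $\sigma_N$ by $\sum_k (k/N)^j c_{N,k}$; the remaining soft-analysis part (Helly's theorem plus weak convergence against a continuous function) is routine.
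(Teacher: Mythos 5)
The paper offers no proof of this statement---it is quoted verbatim from Widder's book---so the only meaningful comparison is with the classical argument behind that citation, which is exactly what you have reconstructed: the forward-difference computation $(-1)^n\Delta^n n_k=\int_0^1 t^k(1-t)^n\,d\sigma(t)$ for sufficiency, and for necessity the identity $n_j=\sum_{k=0}^N\binom{N-j}{k-j}(-1)^{N-k}\Delta^{N-k}n_k$ (obtained from $E^j=E^j(E-\Delta)^{N-j}$, equivalent to your falling-factorial form), the uniform comparison of falling factorials with powers, and Helly selection. Your proof is correct and is essentially Widder's own.
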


We now prove the the integrability of $K$ and its monotonicity on $(0,\pi)$, which is crucial for the touching lemma and the boundary point lemma in the next section.
\begin{thm}[Properties of $K$] \label{thm:P} The periodic kernel $K$ satisfying the assumption (S) or (H) is even, real-valued and smooth on $\T\setminus\{0\}$. Moreover, $K\in L^1(\T)$ is decreasing on $(0,\pi)$.
 If the symbol $m$ is homogeneous of degree $r<0$, then
\[
K(x)=\int_0^1 \left(\frac{2(\cos(x)-t)}{1-t\cos(x)+t^2}+a_0(t)\right)\,d\sigma(t),
\]
where $\sigma:[0,1]\to \R$ is a nondecreasing and bounded function depending on $m$, and $a_0\in L^\infty(0,1)$. If the symbol $m$ is inhomogeneous and satisfies (S), then
\[
K(x)=\int_0^1\Theta_3\left(\frac{x}{2},u\right)\,d\nu(u),
\]
where $\nu:[0,1]\to \R$ is a nondedcreasing and bounded function depending on $m$, and $\Theta_3$ is the third Theta function.
\end{thm}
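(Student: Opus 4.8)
I would handle the two cases of the Assumption separately, in each case first establishing the stated integral representation of $K$ and then reading off the five assertions (evenness, reality, smoothness on $\T\setminus\{0\}$, integrability, monotonicity on $(0,\pi)$) from properties of the single‑parameter integrand, which is far more tractable than the Fourier series itself. Evenness and reality of $K(x)=\sum m(k)\cos(kx)$ are immediate from $m$ being real and even, so the substance lies in the representation, the $L^1$ bound, monotonicity on the half‑period, and smoothness away from the origin; in each case monotonicity will be essentially algebraic, while smoothness --- and, for case (H), also integrability --- will rest on uniform control of the integrand as $t\to1^-$.

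\textbf{The inhomogeneous case (S).} Since $(n_k)_k=(m(\sqrt k))_k$ is completely monotone, Theorem~\ref{thm:B} yields a bounded nondecreasing $\sigma$ on $[0,1]$ with $m(j)=n_{j^2}=\int_0^1 t^{j^2}\,d\sigma(t)$ for all $j\in\N_0$; inserting this into the series and using $\sum_{k\in\Z}t^{k^2}e^{ikx}=\Theta_3(x/2,t)$ gives $K(x)=\int_0^1\Theta_3(x/2,t)\,d\nu(t)$ with $\nu=\sigma$. The sum--integral interchange needs care, since for $r\in(-1,0)$ the series is only conditionally convergent: I would perform it on symmetric partial sums, bound $|\sum_{|k|\le N}t^{k^2}e^{ikx}|\le C_x$ uniformly in $N\ge1$ and $t\in[0,1]$ by Abel summation (the weights $t^{k^2}$ being nonnegative and decreasing in $|k|$), and then pass to the limit by dominated convergence against the finite measure $d\sigma$, which is legitimate for every $x\ne0$; one also records $\sigma(\{1\})=0$, since $m\in S^r(\Z)$ with $r<0$ forces $m(j)\to0$ while $m(j)\to\sigma(\{1\})$ by dominated convergence. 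With the representation in hand the remaining properties follow from those of $\Theta_3(\cdot,t)$, $t\in[0,1)$: the Jacobi triple product $\Theta_3(x/2,t)=\prod_{m\ge1}(1-t^{2m})(1+2t^{2m-1}\cos x+t^{4m-2})$ exhibits every factor as strictly positive and strictly increasing in $\cos x$ for $t\in(0,1)$, so $\Theta_3(x/2,t)$ is positive and strictly decreasing on $(0,\pi)$, and integrating against $d\sigma\ge0$ makes $K$ decreasing on $(0,\pi)$; since $K\ge0$, Tonelli together with $\int_\T\Theta_3(x/2,t)\,dx=2\pi$ gives $\|K\|_{L^1}=2\pi\,\sigma([0,1])<\infty$; finally, Poisson summation (the Jacobi imaginary transformation) $\Theta_3(x/2,e^{-\pi s})=s^{-1/2}\sum_{n\in\Z}e^{-\pi(n-x/(2\pi))^2/s}$ shows $\Theta_3(x/2,t)\to0$ as $t\to1^-$ together with all its $x$‑derivatives, uniformly on compact subsets of $(0,2\pi)$, so the integrand and its $x$‑derivatives are bounded uniformly in $t$ on such sets and differentiation under the integral yields $K\in C^\infty(\T\setminus\{0\})$.

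\textbf{The homogeneous case (H).} Here one may take $m(k)=|k|^r$ (the comparability constant is irrelevant, and an admissible lower‑order even part of $m$ only feeds into the term $a_0$). Starting from $k^r=\Gamma(-r)^{-1}\int_0^\infty s^{-r-1}e^{-ks}\,ds$ and substituting $t=e^{-s}$ produces $m(j)=\int_0^1 t^{j-1}\,d\sigma(t)$ with the explicit, finite, nondecreasing measure $d\sigma(t)=\Gamma(-r)^{-1}(\log\tfrac1t)^{-r-1}\,dt$ (finite at $t=1$ precisely because $r<0$); summing the resulting geometric series gives $K(x)=\int_0^1\big(\tfrac{2(\cos x-t)}{1-2t\cos x+t^2}+a_0(t)\big)\,d\sigma(t)$ (the rational kernel of the statement), where $a_0(t)$ is the bounded (in $t$) constant enforcing the zero‑mean condition $\widehat K(0)=0$ --- it vanishes when $m(k)=|k|^r$ exactly. (Alternatively one could apply Theorem~\ref{thm:B} to the completely monotone sequence $(k^{r/2})_k$, but the Laplace identity yields the rational kernel directly.) The interchange is again done on partial sums, via $|2\sum_{k=1}^N t^{k-1}e^{ikx}|\le C_x$ uniformly in $N$ and $t\in[0,1]$ for each fixed $x\ne0$, plus dominated convergence. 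Writing $u=\cos x$, the building block $P(x,t)=\tfrac{2(u-t)}{1-2tu+t^2}$ satisfies $\partial_u P=\tfrac{2(1-t^2)}{(1-2tu+t^2)^2}>0$, hence is even in $x$ and strictly decreasing on $(0,\pi)$, so adding $a_0(t)$ and integrating against $d\sigma\ge0$ makes $K$ decreasing on $(0,\pi)$. For integrability I would establish $\sup_{t\in[0,1)}\|P(\cdot,t)\|_{L^1(\T)}<\infty$ --- the single delicate estimate here, because as $t\to1^-$ the kernel $P(\cdot,t)$ develops a spike at $x=0$ of height $\sim(1-t)^{-1}$ and width $\sim1-t$ whose mass is nevertheless $O(1)$ --- and then $\|K\|_{L^1}\le(\sup_t\|P(\cdot,t)\|_{L^1}+\|a_0\|_\infty)\,\sigma([0,1])<\infty$; smoothness on $\T\setminus\{0\}$ follows by differentiating under the integral, since the denominator $1-2t\cos x+t^2$ is bounded below uniformly in $t\in[0,1]$ on every compact subset of $(0,2\pi)$.

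\textbf{Main obstacle.} All the genuine difficulty is concentrated at the endpoint $t\to1^-$, where the building blocks degenerate and the representations become singular. In case (S) this is exactly what forces the modular transformation of $\Theta_3$ to control it and its $x$‑derivatives away from $x=0$ --- a crude estimate on the product formula is useless, since all the prefactors $1-t^{2m}$ tend to $0$ --- and in case (H) it is precisely the sharp $O(1)$ bound on the $L^1$‑mass of the concentrating kernel $P(\cdot,t)$ that carries the integrability assertion. A second, pervasive technicality, present whenever $r\in(-1,0)$ (for instance for the Whitham symbol), is that $\sum m(k)e^{ikx}$ converges only conditionally, so every passage between the series and its integral representation must first be arranged on partial sums, with an Abel‑ or geometric‑summation bound, before dominated convergence can be applied.
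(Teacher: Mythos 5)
Your proposal is correct and, for the inhomogeneous case (S), follows essentially the same route as the paper: Hausdorff's moment theorem (Theorem \ref{thm:B}) applied to $(m(\sqrt{k}))_k$, recognition of the third Theta function, and then reading off positivity and monotonicity on $(0,\pi)$ from the integrand. Where you go beyond the paper is in the level of justification: the paper performs the sum--integral interchange formally and simply asserts the relevant properties of $\Theta_3$ and the integrability of $K$ (the latter by citing Boas), whereas you supply the Abel-summation domination, the observation $\sigma(\{1\})=0$, the Jacobi triple product for strict monotonicity, the modular transformation for smoothness as $t\to 1^-$, and a self-contained Tonelli computation of $\|K\|_{L^1}$ --- all of which are sound. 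For the homogeneous case (H) the paper outsources the entire argument to \cite[Theorem 3.6]{BD} with the remark that the extension to $r\in[-1,0)$ is straightforward; your Laplace-transform construction of the explicit moment measure $d\sigma(t)=\Gamma(-r)^{-1}(\log\tfrac1t)^{-r-1}\,dt$, the uniform-in-$t$ bound on $\|P(\cdot,t)\|_{L^1(\T)}$, and the computation $\partial_u P=2(1-t^2)/(1-2tu+t^2)^2>0$ give an independent, self-contained proof of that half of the statement. One small but worthwhile observation: your denominator $1-2t\cos x+t^2$ is the correct one (it is what the geometric series actually produces), so the expression $1-t\cos(x)+t^2$ in the statement of Theorem \ref{thm:P} appears to be a typo.
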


\begin{proof} In fact, if $m$ is a homogeneous symbol of degree $r<-1$ the claim is proved in \cite[Theorem 3.6]{BD} and it is straightforward to adapt the proof for the range $r\in [-1,0)$. We therefore skip the details. 
 Assume that $m$ is an inhomogeneous symbol satisfying (S). Since $m$ is even and real-valued, also $K$ is even and real-valued. The integrability of $K$ follows from the decay property of $m$ and \cite[Theorem 5.13]{Boa}. Now, we prove that $K$ is smooth on $\T\setminus\{0\}$ and decreasing on the half-period $(0,\pi)$.
  If $n:=m(|\sqrt{\cdot}|)$, then Lemma \ref{lem:CM} implies that  $n_k:=n(k)$ build a completely monotone sequence $(n_k)_{k\in \N_0}$. In view of Theorem \ref{thm:B}, there exists a nondecreasing and bounded function $\nu$ such that
\[
n_k=\int_0^1 u^kd\nu(u),\qquad k\geq 0.
\]
We have that $m(k)=n(k^2)$ and thus
\[
m(k)=\int_0^1 u^{k^2}d\nu(u),\qquad \mbox{for all}\qquad k\geq 0.
\]
Consider $u^{k^2}$ as  Fourier coefficients for some function $f(u,x)$, that is
\[
u^{k^2}=\int_{\T}f(u,x)e^{-ikx}\qquad \mbox{for}\qquad f(u,x)=\sum_{k\in \Z} u^{k^2}e^{ixk}.
\]
The latter sum is also known as the third Theta function:
\[
	f(u,x)=\sum_{k\in \Z} u^{k^2}e^{ixk} =: \Theta_3\left(\frac{x}{2},u\right).
\]
We conclude that
\[
	m(k)= \int_0^1 \int_{\T}\Theta_3 \left(\frac{x}{2},u\right)e^{-ikx} \,dx\,d\nu(u)= \int_{\T}\int_0^1 \Theta_3 \left(\frac{x}{2},u\right)\,d\nu(u)e^{-ixk}\,dx.
\]
Since $(m(k))_{k\in \Z}$ form the Fourier coefficients of $K$, we deduce that
\[
	K(x)=\int_0^1 \Theta\left( \frac{x}{2},u \right)\,d\nu (u).
\]
From here, we obtain all claimed properties of $K$ relying on the properties of the Theta function. In particular, we have that for all $u\in (0,1)$ the function $\Theta(\frac{\cdot}{2},u)$ is even, positive on $\T$ and $\frac{d}{dx}\Theta\left(\frac{x}{2},u\right)< 0$ for all $x\in (0,\pi)$ and all $u\in (0,1)$. Hence $K$ is smooth away from the origin and $K^\prime(x)< 0$ for all $x\in (0,\pi)$.
\end{proof}

\begin{lem}\label{behaviour of K at origin}
The map
$x\mapsto \sin(x)K(x)$ belongs to $L^\infty(\T)$ and
$
	\lim_{|x|\to 0} xK(x)=0.
$
\end{lem}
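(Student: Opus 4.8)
The plan is to read both assertions off the properties of $K$ established in Theorem~\ref{thm:P}: $K$ is even, real-valued, smooth on $\T\setminus\{0\}$, belongs to $L^1(\T)$, and is decreasing on $(0,\pi)$. The only delicate point is the behaviour at the origin: for every $\delta\in(0,\pi)$ the arc $\{x\in\T:\delta\le x\le 2\pi-\delta\}$ is a compact subset of $\T\setminus\{0\}$, so $K$ is bounded there; since $x\mapsto\sin(x)K(x)$ is odd and (near $0$) so is $x\mapsto xK(x)$, by evenness of $K$ it therefore suffices to control $K$ as $x\to 0^+$.

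Because $K$ is monotone decreasing on $(0,\pi)$, the one-sided limit $\ell:=\lim_{x\to 0^+}K(x)=\sup_{(0,\pi)}K$ exists in $(-\infty,+\infty]$. If $\ell<\infty$, then $K$ is squeezed between $K(\pi/2)$ and $\ell$ on $(0,\pi/2)$, hence bounded near $0$, so $K\in L^\infty(\T)$ and both claims are trivial. The substantive case is $\ell=+\infty$, in which $K>0$ on some interval $(0,\delta)$.

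In that case I would invoke the elementary fact that a nonnegative decreasing integrable function grows at most like $1/x$: for $0<x<\delta$, monotonicity gives
\[
\int_0^x K(s)\,ds\;\ge\;\int_{x/2}^{x}K(s)\,ds\;\ge\;\frac{x}{2}\,K(x)\;\ge\;0 .
\]
Since $K\in L^1(\T)$, the left-hand side tends to $0$ as $x\to 0^+$ by absolute continuity of the Lebesgue integral, whence $xK(x)\to 0$; combined with the parity discussion this yields $\lim_{|x|\to 0}xK(x)=0$. Moreover $|\sin(x)K(x)|\le|x|K(x)\to 0$ as $x\to 0^+$, so $\sin(\cdot)K(\cdot)$ is bounded near the origin, and together with its boundedness on every arc away from $0$ we get $\sin(\cdot)K(\cdot)\in L^\infty(\T)$.

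There is no real obstacle here; the one thing to be careful about is that it is not known \emph{a priori} whether $K$ is bounded at the origin --- for homogeneous symbols of order $r<-1$ it is, while for $r\in[-1,0)$ it is not --- which is exactly why the argument bifurcates on whether $\ell$ is finite. Alternatively one could extract the bound $K(x)=O(1/|x|)$ directly from the integral representations in Theorem~\ref{thm:P}, estimating the Theta kernel $\Theta_3(x/2,u)$ and the rational kernel uniformly in the integration variable as $x\to 0$, but the monotonicity argument is shorter and requires less.
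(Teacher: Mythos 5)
Your proof is correct, but it takes a genuinely different route from the paper's. The paper argues on the Fourier side: using $\sin(x)\cos(kx)=\tfrac12\bigl(\sin((k+1)x)-\sin((k-1)x)\bigr)$ it rewrites $\sin(x)K(x)$ as a sine series with telescoped coefficients $m(k-1)-m(k+1)\lesssim |k|^{r-1}$, then invokes the classical criterion that such a sine series converges uniformly precisely because $k\bigl(m(k-1)-m(k+1)\bigr)\to 0$; boundedness and $\lim_{|x|\to0}xK(x)=0$ are then read off from uniform convergence. You instead work entirely on the physical side, using only the conclusions of Theorem~\ref{thm:P} (evenness, smoothness away from $0$, $K\in L^1(\T)$, monotonicity on $(0,\pi)$) together with the elementary fact that a nonnegative decreasing integrable function $g$ satisfies $xg(x)\le 2\int_{x/2}^{x}g\to 0$. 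Your case split on whether $K$ stays bounded at the origin is handled correctly, the parity reduction to $x\to0^{+}$ is legitimate since $K$ is even and $\sin$ is odd, and boundedness away from the origin follows from continuity on compact arcs, so the argument is complete. What each approach buys: yours is shorter and needs nothing about the symbol beyond what Theorem~\ref{thm:P} already delivers, cleanly isolating the only possible obstruction at the origin; the paper's version is independent of the monotonicity of $K$ --- the deeper part of Theorem~\ref{thm:P}, partly deferred to earlier work in the homogeneous case --- relying only on the symbol estimate $|\Delta m(k)|\lesssim|k|^{r-1}$, and it yields the explicit quantitative bound $|\sin(x)K(x)|\lesssim 1+\sum_{k\ge 2}|k|^{r-1}$.
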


\begin{proof}
Using the identity $\sin(x)\cos(xk)= \frac{1}{2}\left(\sin(x(k+1))-\sin(x(k-1))\right))$, we get
\[
	\sin(x)K(x)=c_h\sin(x)+\sum_{k=1}^\infty m(k)\left(\sin(x(k+1))-\sin(x(k-1))\right),
\]
where $c_h=0$ if $m$ is a homogeneous symbol and $c_h= m(0)$ if $m$ satisfies (S). The sum above can be written as
\begin{align*}
\sum_{k=1}^\infty m(k)\left(\sin(x(k+1))-\sin(x(k-1))\right)&=  \sum_{k=2}^\infty m(k-1)\sin(xk)-\sum_{k=0}^\infty m(k+1)\sin(xk)\\
&= m(2)\sin(x)+\sum_{k=2}^\infty \left( m(k-1)-m(k+1)\right)\sin(xk).
\end{align*}
We have that $\sum_{k=2}^\infty a_k\sin(xk)$ converges uniformly on $\T$ iff  $\lim_{k\to \infty} ka_k=0$ (cf. \cite{Boa}). In view of either assumption (S) or (H), we have that $m(k-1)-m(k+1)\leq \Delta m(k)\lesssim |k|^{r-1}$ for $k\geq 2$. Since $r<0$, it is clear that $\lim_{k\to \infty}k(m(k-1)-m(k+1))=0$ and the series $\sum_{k=2}^\infty(m(k-1)-m(k+1))\sin(xk)$ converges uniformly on $\T$. We deduce that $|\sin(x)K(x)|\lesssim 1 + \sum_{k=2}|k|^{r-1}\lesssim 1$ for all $x\in \T$, which implies that $x\mapsto \sin(x)K(x)$ belongs to $L^\infty(\T)$. Moreover,
\begin{align*}
	\lim_{|x|\to 0}xK(x) &= \lim_{|x|\to 0}\sin(x)K(x) = \lim_{|x|\to 0}\lim_{n\to \infty}\sum_{k=2}^n \left( m(k-1)-m(k+1)\right)\sin(xk)\\
	&= \lim_{n\to \infty}\lim_{|x|\to 0}\sum_{k=2}^n \left( m(k-1)-m(k+1)\right)\sin(xk)\\
	& =0.
\end{align*}
Here, we used that $\lim_{|x|\to 0}\frac{\sin(x)}{x}=1$.
\end{proof}

\begin{remark}\label{rem:R}
\normalfont
\begin{enumerate}[a)]
\item 
 Requiring complete monotonicity of $(m(|\sqrt{k}|))_{k\in \N_0}$ instead of assuming that $(m(k))_{k\in \N_0}$ is completely monotone actually broadens the class of admissible symbols, since the composition $n:=m \circ \sqrt{\cdot}$ is completely monotone whenever $m$ is completely monotone.
\item 
The proof of Theorem \ref{thm:P} reveals that whenever the sequence $(n_k)_{k\in \N_0}$ defined by $n_k:=m(|\sqrt{k}|)$ is completely monotone, the corresponding convolution kernel given by $K(x)=\sum m(k)\cos(xk)$ is decreasing on the half-period $(0,\pi)$. 
It turns out to be a  nontrivial task to find a one-to-one correspondence between properties of the Fourier coefficients and decay on a half-period of the corresponding Fourier series. The assumptions we impose on the symbol $m$  are chosen to be fairly mild to include a wide range of admissible symbols while still enabling a straightforward verification in specific examples. 
\item
Assuming that the function $n:=m(|\sqrt{\cdot}|):(0,\infty)\to (0,\infty)$ is not only bounded and completely monotone, but  also extends additionally to an analytic function on $\C\setminus (-\infty,0]$ with  $\Ima z\cdot\Ima n(z)\leq 0$, then the convolution kernel $K$ inherits the property of being completely monotone (cf. \cite[Theorem. 2.9, Proposition 2.20, Remark 3.4]{EW}). 
\end{enumerate}
\end{remark}

\bigskip

\section{Symmetry of traveling waves}
\label{S:T}

%
In this section we prove the main theorem on the symmetry of periodic traveling waves for nonlinear dispersive equations of the from \eqref{eq:Equation} where the symbol $m$ satisfies either assumption (S) or (H), and the wave profile satisfies the reflection criterion, which we recall for convenience.
\begin{description} 
\item[Reflection criterion]
A $2\pi$-periodic, continuous function  is said to satisfy the \emph{reflection criterion} if  there exists $\lambda_*\in \T$ such that
\[
	\phi(x)>\phi(2\lambda_*-x)\qquad \mbox{for all}\qquad x\in (\lambda_*, \lambda_*+ \pi).
\]
\end{description}
Taking the  ansatz $u(x,t)= \phi(x-ct)$, where $c>0$ denotes the speed of the right--propagating wave,  equation \eqref{eq:Equation} transforms after integration  to 
\begin{equation}\label{WT}
-c\phi + L\phi+\phi^2=B,
\end{equation}
where $B\in \R$ is a constant of integration. If $m$ is an inhomogenous symbol, then there exists a Galilean shift of variables 
 \[
	 \phi \mapsto \phi+ \gamma, \qquad c\mapsto c+2\gamma, \qquad B\mapsto B+\gamma(m(0)-c-\gamma),
 \]
 which allows us to set the integration constant $B$ to zero. This choice corresponds to a solution with possible different speed and elevation, but the form of solutions remains intact. If on the other hand $m$ is a homogeneous symbol, we assume that $\phi$ is a function of zero mean, which determines the integration constant to be $B= \frac{1}{2\pi}\widehat{\phi^2}(0)$. In what follows, we consider the equation
 \begin{equation}\label{eq:new}
 -c\phi +L\phi +\phi^2 =B_h,
 \end{equation}
 where $B_h=0$ if $m$ is inhomogeneous and $B_h=\frac{1}{2\pi}\widehat{\phi^2}(0)$ if $m$ is homogeneous.
 
\subsection{Regularity of traveling waves}\label{Ss:reg}
 If the symbol of the operator $L$ is homogeneous, we work on $X_0$,  the restriction of a function space $X$ to its subset of zero mean functions. Let $\phi \in L^\infty(\T)$ or $\phi \in L^\infty_0(\T)$ be a  $2\pi$-periodic solution of \eqref{eq:new}.
For the clarity of presentation, we use in the sequel the following convention: Whenever it is clear from the context the index zero is suppressed, that is we simply write $X$ and mean $X_0$ if $m$ is a homogeneous symbol.

\begin{satz}\label{prop:reg}
Let $\phi\leq \frac{c}{2}$ be a bounded solution of \eqref{eq:new}. Then $\phi$ is smooth on any open set where $\phi<\frac{c}{2}$. 
\end{satz}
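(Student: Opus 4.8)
The statement is a bootstrap (elliptic regularity) result for the nonlocal equation \eqref{eq:new}. The plan is to rewrite \eqref{eq:new} as a fixed-point / algebraic relation that isolates $L\phi$, namely
\[
L\phi = c\phi - \phi^2 + B_h,
\]
and then exploit two facts: first, that $L$ gains $-r>0$ derivatives (Proposition \ref{prop:FM}), so $\phi = L^{-1}(\text{RHS})$ is as smooth as the right-hand side plus $-r$; second, that the map $\phi \mapsto \phi^2$ is well behaved on H\"older/Zygmund spaces \emph{away from the singular set}. The key structural point is the hypothesis $\phi < \frac{c}{2}$ on the open set $U$: it guarantees that the coefficient $c - 2\phi$ appearing when we differentiate the relation is bounded away from zero on compact subsets of $U$, so that the nonlinearity can be "inverted" locally. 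Concretely, I would argue on an arbitrary open set $V$ with $\overline V \subset U$ compact, where $c - 2\phi \geq \delta > 0$.

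First I would establish the base regularity: since $\phi \in L^\infty(\T) \subset \mathcal C^0(\T)$ and $\phi^2 \in L^\infty(\T)$, the relation $L\phi = c\phi - \phi^2 + B_h$ together with Proposition \ref{prop:FM} gives $\phi \in \mathcal C^{-r}(\T)$ globally (with $-r > 0$). Thus $\phi$ is already H\"older continuous of some positive order on all of $\T$. Next comes the local bootstrap. Suppose inductively that $\phi \in \mathcal C^{s}(V')$ for some $s \geq -r$ and every $V' \Subset U$ (using a cutoff to localize: multiply \eqref{eq:new} by a smooth bump $\chi$ supported in $U$, equal to $1$ on $V$, and control the commutator $[\chi, L]$, which is smoothing of order $-r-1$ since $m \in S^r$). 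Then $\chi\phi^2 \in \mathcal C^s$ because $\mathcal C^s(\T)$ is an algebra for $s>0$, and from the localized relation $L(\chi\phi) = \chi(c\phi - \phi^2 + B_h) + [\chi,L]\phi$ one concludes $\chi\phi \in \mathcal C^{s+(-r)}$, hence $\phi \in \mathcal C^{s-r}(V)$. Iterating raises the regularity index by $-r > 0$ at each step without bound, so $\phi$ is $C^\infty$ on $V$; since $V \Subset U$ was arbitrary, $\phi$ is smooth on $U$.

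There is a subtlety I would address carefully: this naive iteration only works cleanly if one never needs to divide by $c - 2\phi$; in fact for pure gain-of-regularity from the form $L\phi = F(\phi)$ one does not, since $L$ is the only operator that "loses" regularity and it appears with a good sign. The role of $\phi < \frac{c}{2}$ is therefore slightly different and more delicate: it is needed to rule out that the iteration stalls at a finite regularity threshold because of a \emph{borderline} Zygmund index, and — more importantly in the companion results — it is what distinguishes the smooth regime from the highest-wave regime where $\phi$ touches $\frac{c}{2}$ and genuine singularities (peaks, cusps) form. Technically, the clean way to package this is: on $V$ the equation can be differentiated, $(c - 2\phi)\phi' = (L\phi)'= L(\phi')$ (the last step using that $L$ commutes with $\partial_x$ on periodic functions, modulo the zero-mode in the homogeneous case), and since $c - 2\phi \geq \delta$ on $V$ we get $\phi' = (c-2\phi)^{-1} L\phi'$ with $(c-2\phi)^{-1} \in \mathcal C^s(V)$ whenever $\phi \in \mathcal C^s(V)$; this self-improving identity then feeds the same Zygmund-space bootstrap for $\phi'$, and one propagates through all derivatives.

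\textbf{Main obstacle.} The hard part is the \emph{localization}: \eqref{eq:new} is genuinely nonlocal, so "$\phi < \frac c2$ on $U$" and "$\phi$ smooth on $U$" are statements about $U$ only, whereas $L\phi$ at a point depends on all of $\phi$. The device to overcome this is the commutator estimate for $[\chi, L]$ with $\chi \in C_c^\infty$: because $m \in S^r(\Z)$ (assumption (S)) or $m(k) \eqsim |k|^r$ (assumption (H)), pseudodifferential calculus on $\T$ gives that $[\chi, L] : \mathcal C^s(\T) \to \mathcal C^{s - r + 1}(\T)$ is smoothing of order one better than $L$ itself, so the commutator term is harmless in the iteration and never obstructs the gain. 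Making this commutator bound precise for the discrete symbol class $S^r(\Z)$ — and handling the zero Fourier mode in the homogeneous case, where one works on $X_0$ and the cutoff $\chi\phi$ need not have zero mean, so a projection correction is required — are the two technical points that demand care; everything else is a routine induction on the Zygmund scale.
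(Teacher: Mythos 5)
There is a genuine gap, and it sits at the heart of your bootstrap: you have the direction of the regularity gain reversed. The operator $L$ has symbol of order $r<0$, so it is $L$ itself that is smoothing ($L:\mathcal C^{s}\to\mathcal C^{s-r}$ with $s-r>s$), and consequently $L^{-1}$ is an operator of positive order $-r$ that \emph{loses} $-r$ derivatives. Your base step (``$\phi=L^{-1}(c\phi-\phi^2+B_h)$ is as smooth as the right-hand side plus $-r$'') and your inductive step (``from $L(\chi\phi)=\chi(c\phi-\phi^2+B_h)+[\chi,L]\phi$ one concludes $\chi\phi\in\mathcal C^{s-r}$'') are therefore both invalid: knowing that $L(\chi\phi)$ lies in $\mathcal C^{s}$ gives at best $\chi\phi\in\mathcal C^{s+r}$, which is \emph{weaker} than the inductive hypothesis, so the iteration gains nothing. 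The regularity gained by $L$ lands on $L\phi$, and the only way to transfer it back to $\phi$ is to invert the \emph{nonlinearity}: writing \eqref{eq:new} as $B_h+\tfrac{c^2}{4}-\bigl(\tfrac c2-\phi\bigr)^2=L\phi$ and solving for $\phi$ gives
$\phi=\tfrac c2-\sqrt{B_h+\tfrac{c^2}{4}-L\phi}$,
where the hypothesis $\phi<\tfrac c2$ is exactly what selects the monotone branch of the quadratic and keeps the argument of the square root positive, so that the Nemytskii operator $f\mapsto\tfrac c2-\sqrt{B_h+\tfrac{c^2}{4}-f}$ preserves $\mathcal C^{s-r}$. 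This is the paper's proof, and it is precisely the step you explicitly disclaim as unnecessary (``for pure gain-of-regularity from the form $L\phi=F(\phi)$ one does not [need to divide by $c-2\phi$]''); in fact the inversion of $y\mapsto cy-y^2$ on $(-\infty,\tfrac c2)$ --- in integrated form, the square root --- \emph{is} the gain-of-regularity step.

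Your third paragraph does contain the correct mechanism in differentiated form, $(c-2\phi)\phi'=L\phi'$, hence $\phi'=(c-2\phi)^{-1}L\phi'$, and this identity would indeed drive a bootstrap once $\phi\in C^1$ is known; but it cannot produce the base regularity from $\phi\in L^\infty(\T)$, which is exactly where the square-root formulation is indispensable. On localization: your instinct that the nonlocality of $L$ makes the restriction to the open set delicate is sound (the paper disposes of this rather briskly, by covering $U$ with small connected sets and invoking translation invariance of the global argument), and a commutator estimate for $[\chi,L]$ is a reasonable device --- but it is secondary, and it cannot repair a bootstrap that runs in the wrong direction.
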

\begin{proof}
 Assume first that $\phi<\frac{c}{2}$ uniformly on $\T$ and  $\phi \in \mathcal{C}^s(\T)$ for some $s\geq 0$. Equation \eqref{eq:new} can be written as
 \begin{equation}\label{eq:formulation}
 B_h+\frac{c^2}{4}-\left(\frac{c}{2}-\phi \right)^2 =L\phi.
 \end{equation} 
 Due to our assumptions on the symbol $m$ and Proposition \ref{prop:FM}, the Fourier multiplier operator $L$ is a smoothing operator of order $-r$  and $L\phi \in \mathcal{C}^{s-r}(\T)$.
Moreover, for $s-r>0$ the Nemytskii operator
\begin{align*}
f\mapsto \frac{c}{2} - \sqrt{B_h+\frac{c^2}{4}- f}
\end{align*}
maps $  \mathcal{C}^{s-r}(\T)$ into itself if $f<B_h+\frac{1}{4}c^2$. From \eqref{eq:formulation} we see immediately that $L\phi<B_h+\frac{c^2}{4}$ if $\phi<\frac{c}{2}$.
Thus, we may take the square root to obtain that
\[
\phi = \frac{c}{2}-\sqrt{B_h+\frac{c^2}{4}-L\phi} \in \mathcal{C}^{s-r}(\T).
\]
Hence, an iteration argument guarantees that $\phi\in C^{\infty}(\T)$. Since any Fourier multiplier commutes with the translation operator, we actually have that $\phi \in C^\infty(\R)$.

Now, let $U\subset \R$ be an open subset of $\R$ on which $\phi <\frac{c}{2}$. Then, we can find an open cover $U=\cup_{i\in I}U_i$, where for any $i\in I$ we have that $U_i$ is connected and satisfies $|U_i|<2\pi$. Due to the translation invariance of \eqref{eq:new} and the previous part, we obtain that $\phi$ is smooth on $U_i$ for any $i\in I$. Since $U$ is the union of open sets, the assertion follows.
\end{proof}

%
%

The following lemma confirms that a non-smooth structure may appear at a crest of height $\frac{c}{2}$.
\begin{satz}\label{prop:C1} If $\phi$ is a nontrivial even, bounded solution of \eqref{eq:new} with a single crest per period and $\max \phi = \frac{c}{2}$, then $\phi$ does not belong to the class $C^1(\T)$.
\end{satz}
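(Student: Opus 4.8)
The plan is to argue by contradiction: suppose $\phi\in C^1(\T)$ is an even, bounded, nontrivial solution of \eqref{eq:new} with a single crest per period and $\max\phi=\frac c2$. By evenness and the single-crest hypothesis, after a translation the crest sits at $x=0$, so $\phi(0)=\frac c2$ and $\phi<\frac c2$ on a punctured neighborhood of $0$ (indeed on all of $(-\pi,\pi)\setminus\{0\}$, since the crest is unique). By Proposition \ref{prop:reg}, $\phi$ is smooth away from $0$; since we are assuming $\phi\in C^1(\T)$ and $\phi$ has a maximum at $0$, necessarily $\phi'(0)=0$. The idea is to differentiate the steady equation \eqref{eq:new} and extract a contradiction from the behaviour of $L\phi'$ (or equivalently of $K*\phi$) near the crest, using the kernel representation and the decay estimate $xK(x)\to 0$ from Lemma \ref{behaviour of K at origin}.

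First I would rewrite \eqref{eq:new} in the form \eqref{eq:formulation}, i.e.
\[
\Bigl(\tfrac c2-\phi(x)\Bigr)^2 = B_h+\tfrac{c^2}{4}-L\phi(x),
\]
and differentiate. Since $\phi\in C^1$ and (away from $0$) smooth, on $(-\pi,\pi)\setminus\{0\}$ we get
\[
-2\Bigl(\tfrac c2-\phi(x)\Bigr)\phi'(x) = -(L\phi)'(x) = -(K*\phi)'(x) = -(K*\phi')(x),
\]
the last equality because differentiation commutes with convolution and $\phi$ is periodic. Now I would study the right-hand side $K*\phi'$ as $x\to 0^+$. Writing out the convolution and isolating the contribution near the singularity of $K$, the key point is that $K$ is even, integrable, and decreasing on $(0,\pi)$ (Theorem \ref{thm:P}), and $\phi'$ is continuous with $\phi'(0)=0$, $\phi'>0$ on $(-\pi,0)$, $\phi'<0$ on $(0,\pi)$ (the last two by the single-crest, even structure). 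One then shows that $K*\phi'$ has a definite sign or a definite one-sided limit as $x\to0^+$ that is incompatible with the left-hand side: the left-hand side is $-2(\frac c2-\phi(x))\phi'(x)$, which tends to $0$ as $x\to0$ because $\frac c2-\phi(x)\to0$ and $\phi'(x)\to0$ — in fact it is $o(1)$ with a specific rate, whereas $K*\phi'$ cannot vanish at $0$ unless $\phi'\equiv0$, contradicting nontriviality. Concretely, I expect to show $\lim_{x\to0^+}(K*\phi')(x)$ exists and equals some $-I$ with $I>0$ coming from $\int_0^\pi (K(x-y)-K(x+y))\phi'(-y)\,dy$-type expressions that are strictly positive by the monotonicity of $K$ and the sign of $\phi'$, while the left side $\to0$; hence $\phi'\equiv0$, a contradiction.

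The main obstacle is making the limit analysis of $K*\phi'$ near the crest rigorous, because $K$ is singular at $0$ (only $xK(x)\to0$, not boundedness of $K$), so $K*\phi'$ is an improper/principal-value integral and one must control the near-diagonal contribution. The device I would use is exactly Lemma \ref{behaviour of K at origin}: split $K*\phi'(x) = \int_{|y|<\delta} + \int_{\delta<|y|<\pi}$; on the far piece everything is smooth and passes to the limit directly, while on the near piece one uses $\phi'(y)=\phi'(y)-\phi'(0)$ together with $C^1$ regularity — actually one needs a modulus of continuity of $\phi'$ — to bound $\int_{|y|<\delta}K(x-y)(\phi'(y))\,dy$ by something like $\sup_{|y|<2\delta}|\phi'(y)|\cdot\int_{|z|<2\delta}|K(z)|\,dz$, which is small uniformly in $x$ near $0$ by integrability of $K$. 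Alternatively, and perhaps more cleanly, I would differentiate \eqref{eq:formulation} only once and instead integrate the resulting identity against a test function, or evaluate the original equation at the crest and at a nearby point and subtract, using $\phi(0)-\phi(x)=o(|x|)$ (from $\phi\in C^1$, $\phi'(0)=0$) against $\int(K(x-y)-K(-y))\phi(y)\,dy$; the asymmetry forced by $K$ decreasing and $\phi$ having a strict maximum at $0$ then yields that the left side is $o(|x|)$ while the right side is bounded below by $c|x|$ or at least not $o(|x|)$ — this comparison is the crux and is where the monotonicity of $K$ on $(0,\pi)$ does the real work. I expect the first approach (differentiate once, analyze $K*\phi'$ at $0^+$) to be the shortest to write.
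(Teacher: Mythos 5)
Your first (and preferred) route breaks down at the decisive step. You differentiate \eqref{eq:formulation} once and want to conclude that $\lim_{x\to 0^+}(K*\phi')(x)=-I$ with $I>0$, contradicting the vanishing of the left-hand side $-2\bigl(\tfrac c2-\phi(x)\bigr)\phi'(x)$. But $K$ is even and $\phi'$ is odd (since $\phi$ is even), so
\[
(K*\phi')(0)=\int_{-\pi}^{\pi}K(-y)\phi'(y)\,dy=0
\]
identically, whatever $\phi$ is; moreover $L\phi\in\mathcal C^{1-r}(\T)$ with $1-r>1$, so $(L\phi)'=K*\phi'$ is continuous and its limit at $0$ is exactly this value, namely $0$. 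Writing the convolution as $\int_0^\pi\bigl[K(x-y)-K(x+y)\bigr]\phi'(y)\,dy$ does show the integrand is single-signed for $x\in(0,\pi)$, but the bracket degenerates to $K(-y)-K(y)=0$ as $x\to0$, so there is no first-order contradiction: both sides of the differentiated equation tend to $0$ at the crest. The same parity cancellation defeats your alternative comparison at order $|x|$, where you hope the right-hand side is bounded below by $c|x|$; it is in fact $O(x^2)$.

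The information sits one order higher, and this is what the paper exploits. From \eqref{eq:formulation} and $\phi\in C^1$ with $\phi'(0)=0$ one gets $\bigl(\tfrac c2-\phi(x)\bigr)^2=o(x^2)$, hence $\frac{L\phi(0)-L\phi(x)}{x^2}\to0$, which (via the mean value theorem, using $(L\phi)'(0)=0$) forces $(L\phi)''(0)=0$. On the other hand, differentiating the convolution twice at the crest and using the evenness of $K$ and $\phi$ gives
\[
(L\phi)''(0)=-2\int_{-\pi}^{0}K'(y)\phi'(y)\,dy<0,
\]
since $K'>0$ on $(-\pi,0)$ by Theorem \ref{thm:P} and $\phi'\gneq 0$ there by the single-crest assumption; this is the contradiction. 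So the monotonicity of $K$ enters exactly where you expected it to, but through $K'$ paired with $\phi'$ in a second-derivative identity, not through a nonzero first-order limit. Your splitting device based on Lemma \ref{behaviour of K at origin} is reasonable machinery for controlling the near-diagonal contribution, but it cannot rescue the first-order argument, because the quantity you are trying to bound away from zero is genuinely zero.
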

\begin{proof} 
 Assume  on the contrary that $\phi\in C^1(\T)$ is an even solution of \eqref{eq:new} with $\max \phi = \phi(0)=\frac{c}{2}$. Then  $L\phi$ belongs to $\mathcal{C}^{1-r}(\T)$ with $(L\phi)^\prime(0)=0$ and 
 \[
\left(\frac{\frac{c}{2}-\phi}{x}\right)^2=\frac{L\phi(0)-L\phi(x)}{x^2}=\frac{(L\phi)^\prime(\xi)-(L\phi)^\prime(0)}{x}\qquad \mbox{for some}\qquad \xi \in [0,x].
 \]
 Since $\phi\in C^1$, the left-hand side above tends to zero as $x\to 0$. We deduce that $(L\phi)^{\prime\prime}(0)=0$. The action of $L$ is given by convolution with $K$, that is
 \[
 (L\phi)^{\prime\prime}(0)=-2\int_{-\pi }^0 K ^\prime(y)\phi^\prime(y)\,dy=-c_0<0,
\]
for some $c_0>0$, which is a contradiction. Here, we used the symmetry of $K$ and $\phi$ and that $K ^\prime\phi^\prime\gneq 0$ on $[-\pi ,0]$, since $K^\prime > 0$ and $\phi^\prime\gneq 0$ on $(-\pi,0)$, due to Proposition \ref{thm:P} and our assumption on $\phi$.
\end{proof}
\begin{remark}
\normalfont
If $\phi\leq \frac{c}{2}$ is a solution of \eqref{eq:new} with $\max \phi=\frac{c}{2}$, we refer to $\phi$ as a so-called \emph{highest wave}. 
\end{remark}
%
%
%
%
\subsection{Symmetry of traveling waves}
 We start with a touching lemma, which is similar to the one formulated in \cite[Lemma 4.3]{EW} for solitary waves. A solution $\phi$ of \eqref{eq:new} is called a \emph{supersolution} if
\[c\phi \geq L\phi+\phi^2-B_h\]
and a \emph{subsolution} if the inequality sign above is replaced by $\leq$.

\begin{lem}[Touching lemma within one period]\label{touching lemma}
Let $\phi$ be a bounded $2\pi $-periodic super- and $\bar{\phi}$ a bounded $2\pi $-periodic subsolution of \eqref{eq:new}. If $\phi \geq \bar{\phi}$ on $[\lambda,\lambda +\pi ]$ and $\phi-\bar{\phi}$ is odd with respect to $\lambda$, then either
\begin{itemize}
\item[$\bullet$] $\phi=\bar{\phi}$ on $\R$ or
\item[$\bullet$] $\phi > \bar{\phi}$  with $\phi+\bar \phi<c$ on $(\lambda, \lambda+\pi )$.
\end{itemize}
\end{lem}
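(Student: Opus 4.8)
Set $w := \phi - \bar\phi$. By hypothesis $w \geq 0$ on $I := [\lambda, \lambda+\pi]$ and $w$ is odd about $\lambda$, so $w \leq 0$ on $[\lambda - \pi, \lambda]$ and $w$ is $2\pi$-periodic. Subtracting the subsolution inequality from the supersolution inequality gives, pointwise on $\R$,
\[
c\,w(x) \;\geq\; L w(x) + \bigl(\phi(x) + \bar\phi(x)\bigr) w(x),
\]
i.e. $\bigl(c - \phi(x) - \bar\phi(x)\bigr) w(x) \geq L w(x) = (K * w)(x)$. The idea is to evaluate this at a point $x_0 \in (\lambda, \lambda+\pi)$ where $w$ is large and use the monotonicity and evenness of $K$ (Theorem \ref{thm:P}) together with the oddness of $w$ about $\lambda$ to force $(K*w)(x_0) > 0$ unless $w \equiv 0$.

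\emph{Key computation.} Using the substitution $y \mapsto 2\lambda - y$ and the evenness of $K$, split the convolution integral over one period centered appropriately and pair the point $y$ with its reflection $2\lambda - y$:
\[
(K*w)(x_0) \;=\; \int_{\lambda}^{\lambda+\pi} \bigl(K(x_0 - y) - K(x_0 - 2\lambda + y)\bigr)\, w(y)\, dy,
\]
where I have used $w(2\lambda - y) = -w(y)$. For $x_0 \in (\lambda, \lambda+\pi)$ and $y \in (\lambda, \lambda+\pi)$ one checks that the "distance mod $2\pi$" from $x_0$ to $y$ is strictly less than that from $x_0$ to $2\lambda - y$ precisely when $y$ is on the same side, and since $K$ is even, $2\pi$-periodic and strictly decreasing on $(0,\pi)$, this yields $K(x_0 - y) - K(x_0 - 2\lambda + y) \geq 0$ for all such $y$, with strict inequality on a set of positive measure. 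Hence if $w \not\equiv 0$ on $(\lambda, \lambda+\pi)$ then $(K*w)(x_0) > 0$, and consequently $\bigl(c - \phi(x_0) - \bar\phi(x_0)\bigr) w(x_0) > 0$. Choosing $x_0$ to be a point where $w(x_0) > 0$ (which exists unless $w \equiv 0$ on $I$, hence on $\R$ by oddness/periodicity), we get both $w(x_0) > 0$ and $\phi(x_0) + \bar\phi(x_0) < c$; running the argument at every $x_0 \in (\lambda, \lambda+\pi)$ where $w > 0$, together with a connectedness/continuity argument to rule out interior zeros, gives $w > 0$ on all of $(\lambda, \lambda+\pi)$ with $\phi + \bar\phi < c$ there.

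\emph{Handling the dichotomy.} If instead $w \equiv 0$ somewhere in the open interval, I argue it must vanish identically: suppose $w(x_0) = 0$ for some $x_0 \in (\lambda, \lambda+\pi)$. Then at $x_0$ the inequality reads $0 \geq (K*w)(x_0)$, but the pairing above shows $(K*w)(x_0) \geq 0$ with strict inequality unless $w$ vanishes a.e. where the kernel-difference is positive — which by the strict monotonicity of $K$ forces $w \equiv 0$ a.e. on $(\lambda,\lambda+\pi)$, hence on $\R$ by continuity, oddness about $\lambda$, and periodicity. This produces exactly the stated dichotomy. The endpoints: at $x = \lambda$ and $x = \lambda + \pi$ oddness forces $w = 0$ automatically, consistent with the open interval in the conclusion.

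\emph{Main obstacle.} The delicate point is the sign analysis of $K(x_0 - y) - K(x_0 - 2\lambda + y)$: one must carefully track arguments modulo $2\pi$ and use that $K$ restricted to $[0,\pi]$ is strictly decreasing (and then extended evenly and periodically), verifying that for $x_0, y \in (\lambda,\lambda+\pi)$ the reflected point $2\lambda - y$ is always "farther" from $x_0$ in the periodic metric. A secondary technical issue is justifying the interchange of the convolution integral with the reflection substitution and controlling integrability near the singularity of $K$ at the origin — here Lemma \ref{behaviour of K at origin} and $K \in L^1(\T)$ (Theorem \ref{thm:P}) are exactly what is needed, since $w$ is bounded. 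I expect the kernel sign bookkeeping to be the real work; everything else is a routine consequence of the super/subsolution inequalities and the established properties of $K$.
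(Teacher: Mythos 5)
Your proposal is correct and follows essentially the same route as the paper: the same reflection pairing $(K*w)(\bar x)=\int_{\lambda}^{\lambda+\pi}\bigl(K(\bar x-y)-K(\bar x+y-2\lambda)\bigr)w(y)\,dy$, the same sign analysis of the kernel difference via the evenness, periodicity and strict monotonicity of $K$ on $(0,\pi)$, and the same contradiction at a point where $w=0$ or $\phi+\bar\phi\geq c$. The paper verifies the slightly stronger fact that the kernel difference is strictly positive on all of $(\lambda,\lambda+\pi)$ (equality occurring only at $y\in\lambda+2\pi\Z$), which streamlines your almost-everywhere bookkeeping, but this is a difference of presentation rather than substance.
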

%
%
\begin{proof}
Let $\phi$ and $\bar \phi$ be a super- and subsolution of \eqref{WT}, respectively, with $\phi \geq \bar \phi$ for all $x\in [\lambda, \lambda+\pi ]$ and $\phi-\bar \phi$ is odd with respect to $\lambda$. Set $w:=\phi-\bar \phi$. Then $w$ is a $2\pi $-periodic function, which is odd with respect to $\lambda$ and $w(x)\geq 0$ for  $x\in (\lambda, \lambda+\pi )$. The function $w$ solves the equation
\[
	cw(x)\geq Lw(x)+w(x)(\phi+\bar \phi)(x),
\]
which is equivalent to
\begin{equation}\label{eq:t1}
	(c-(\phi+\bar \phi)(x))w(x)\geq Lw(x).
\end{equation}
Assume that $w$ is not identical zero, but there exist a point $\bar x \in (\lambda, \lambda+\pi )$, such that either $w(\bar x)=0$ or $(\phi+\bar \phi)(\bar x)\geq c$. Then, we obtain from \eqref{eq:t1} that 
\begin{equation}\label{eq:con}
Lw(\bar x)\leq 0.
\end{equation}
Note that
\begin{align*}
Lw(\bar x)=\int_{\lambda-\pi }^{\lambda+\pi }K(\bar x-y)w(y)\,dy=\int_{\lambda}^{\lambda+\pi }\left[K(\bar x-y)-K(\bar x+y-2\lambda) \right]w(y)\,dy.
\end{align*}
  Now we split the integral above in the following way:
\begin{align}\label{eq:I}
\begin{split}
Lw(\bar x)=&\int_{\lambda}^{\bar x}\left[K(\bar x-y)-K(\bar x+y-2\lambda) \right]w(y)\,dy\\
&+\int_{\bar x}^{\lambda +\pi }\left[K(\bar x-y)-K(2\lambda-\bar x-y) \right]w(y)\,dy,
\end{split}
\end{align}
keeping in mind that $\lambda<\bar x <\lambda+\pi $.
The function 
\[
G_p(y):=K(\bar x-y)-K(\bar x+y-2\lambda) 
\]
is $2\pi $-periodic odd with respect to $\lambda$. Notice that $w$ is nonnegative on the half period $(\lambda, \lambda+\pi )$ and
\[
\lim_{|y-\bar x|\to 0} G_p(y)>0 \qquad \mbox{and}\qquad G_p(\lambda)=G_p(\lambda+\pi )=0.
\]

\begin{center}
		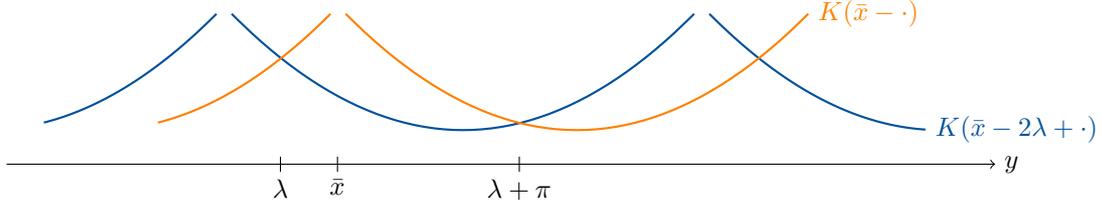
\begin{figure}[h]
		\centering
		\begin{tikzpicture}[scale=1]
		\draw[->] (-6,-1) -- (7,-1) node[right] {$y$};
		\draw[-] (-2.4, -1.1)--(-2.4,-0.9) node[below=5pt]{$\lambda$};
		\draw[-] (-2.4+pi, -1.1)--(-2.4+pi,-0.9) node[below=5pt]{$\lambda+\pi $};
		\draw[-] (-1.65, -1.1)--(-1.65,-0.9) node[below=5pt]{$\bar x$};
		\draw[domain=-pi+0.1:pi-0.1,smooth,variable=\x,luh-dark-blue, thick] plot ({\x},{(3*\x*\x-pi^2)/18});
		\draw[domain=-1.7*(pi+0.1):-(pi+0.1),smooth,variable=\x,luh-dark-blue, thick] plot ({\x},{((3*(\x+2*pi)*(\x+2*pi)-pi^2)/18});
		\draw[domain=pi+0.1:2*(pi-0.1),smooth,variable=\x,luh-dark-blue, thick] plot ({\x},{((3*(\x-2*pi)*(\x-2*pi)-pi^2)/18}) node[right]{$K(\bar x-2\lambda +\cdot)$};
		\draw[xshift = 1.5cm, domain=-pi+0.1:pi-0.1,smooth,variable=\x,orange, thick] plot ({\x},{(3*\x*\x-pi^2)/18}) node[right]{$K(\bar x -\cdot)$};
				\draw[xshift = 1.5cm,domain=-1.7*(pi+0.1):-(pi+0.1),smooth,variable=\x,orange, thick] plot ({\x},{((3*(\x+2*pi)*(\x+2*pi)-pi^2)/18}) ;
		\end{tikzpicture}
		\caption*{Illustration: $G_p(y)=K(\bar x-y)-K(\bar x -2\lambda+y)>0$ on $(\lambda,\lambda+\pi )$.}
		\end{figure}
	\end{center}

We aim to show that $G_p(y)>0$ on $(\lambda,\lambda+\pi )$. Set $z=y-\bar x $ and $v=2(\bar x-\lambda)$, then $G_p(y)=0$ if and only if $K(z)=K(z+v)$. In view of the symmetry  of $K$ and its monotonicity on $(0,\pi )$, it is clear that $K(z)=K(z+v)$ if and only if $v\in 2\pi \Z$ or $v \in -2z+2\pi \Z$. We have $v=2(\bar x-\lambda)\in (0,2\pi )$, therefore $v\notin 2\pi \Z$. Moreover, $v \in -2z+2\pi \Z$ if and only if there exists $n\in \Z$ such that
\[
2(\bar x-\lambda)=-2(y-\bar x)+2\pi n,	
\]
which is equivalent to $y=\lambda+2\pi n$. We deduce that $G_p(y)>0$ on $(\lambda,\lambda+\pi )$.
%
%
Recalling \eqref{eq:I} and $w(y)\geq 0$ on $(\lambda, \lambda+\pi )$, we obtain that $K*w(\bar x)>0$, which is a contradiction to \eqref{eq:con}.

\end{proof}

While the touching lemma is related to a strong maximum principle, the following lemma plays a role as the Hopf boundary point lemma does for elliptic equations.

\begin{lem}[Boundary point lemma]\label{boundary lemma} Let $\phi, \bar \phi\in C^1(\T)$ be two $2\pi $-periodic solutions of \eqref{eq:new}. If $\phi \geq \bar \phi$ on $[\lambda, \lambda + \pi ]$ and $\phi-\bar \phi$ is odd with respect to $\lambda$, then either
\begin{itemize}
\item $\phi = \bar \phi$ on $\R$, or
\item $ (\phi-\bar \phi) ^\prime(\lambda)>0.$
\end{itemize}
\end{lem}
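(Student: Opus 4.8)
The plan is to argue by contradiction, assuming that $\phi \not\equiv \bar\phi$ but $(\phi - \bar\phi)'(\lambda) = 0$. Set $w := \phi - \bar\phi$. By the touching lemma (Lemma~\ref{touching lemma}), since neither alternative $\phi \equiv \bar\phi$ holds, we must be in the second case: $w > 0$ and $\phi + \bar\phi < c$ on the open interval $(\lambda, \lambda + \pi)$. Because $w$ is odd with respect to $\lambda$ and nonnegative on $(\lambda, \lambda+\pi)$, we have $w(\lambda) = 0$, and $w$ is nonpositive on $(\lambda - \pi, \lambda)$. Since $\phi, \bar\phi \in C^1(\T)$, the function $w$ is $C^1$, so $w(\lambda) = 0$ together with the assumption $w'(\lambda) = 0$ will be the source of the contradiction.

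Next I would extract from the equation the key identity governing $Lw$ near $\lambda$. Subtracting the two copies of \eqref{eq:new} gives, exactly as in the proof of the touching lemma,
\[
(c - (\phi + \bar\phi)(x)) w(x) = Lw(x) \qquad \text{for all } x.
\]
Evaluating at $x = \lambda$ and using $w(\lambda) = 0$ yields $Lw(\lambda) = 0$. Now I would write $Lw(\lambda) = K * w(\lambda)$ using the convolution representation and the oddness of $w$ about $\lambda$, just as in \eqref{eq:I}: folding the integral over $[\lambda - \pi, \lambda + \pi]$ onto $[\lambda, \lambda + \pi]$ produces
\[
Lw(\lambda) = \int_{\lambda}^{\lambda+\pi} \left[ K(\lambda - y) - K(y - \lambda) \right] w(y)\, dy = 0,
\]
but by evenness of $K$ the bracket vanishes identically — so this particular identity is vacuous and I need to differentiate instead. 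The right move is to differentiate the relation $(c - (\phi+\bar\phi))w = Lw$ at $x = \lambda$: the left side differentiates to $(c - (\phi+\bar\phi)(\lambda)) w'(\lambda) - (\phi+\bar\phi)'(\lambda) w(\lambda) = 0$ since both $w(\lambda) = 0$ and $w'(\lambda) = 0$. Hence $(Lw)'(\lambda) = 0$ as well. On the other hand, differentiating the convolution,
\[
(Lw)'(\lambda) = \frac{d}{dx}\Big|_{x=\lambda} \int_{\lambda}^{\lambda+\pi} \left[ K(x - y) - K(x + y - 2\lambda) \right] w(y)\, dy.
\]

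The main work, and the anticipated obstacle, is to show this derivative is strictly positive, contradicting $(Lw)'(\lambda) = 0$. Differentiating the kernel combination in $x$ and setting $x = \lambda$, the boundary terms vanish because $w(\lambda) = 0$ and periodicity handles the endpoint $\lambda + \pi$, leaving
\[
(Lw)'(\lambda) = \int_{\lambda}^{\lambda+\pi} \left[ K'(\lambda - y) - K'(y - \lambda) \right] w(y)\, dy = -2 \int_{\lambda}^{\lambda+\pi} K'(y - \lambda)\, w(y)\, dy,
\]
using $K'$ odd. For $y \in (\lambda, \lambda + \pi)$ we have $y - \lambda \in (0, \pi)$, where $K' < 0$ by Theorem~\ref{thm:P}, and $w(y) > 0$ by the touching lemma, so the integrand $K'(y-\lambda) w(y)$ is strictly negative on the whole interval (or at least $\leq 0$ and not identically zero), giving $(Lw)'(\lambda) > 0$ — the desired contradiction. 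The delicate points to handle carefully are: justifying differentiation under the integral sign given that $K$ is only smooth away from the origin and $K'$ may blow up at $0$ (here Lemma~\ref{behaviour of K at origin} and the vanishing of $w$ to first order at $\lambda$ should tame the singularity, since $w(y) = O(|y - \lambda|^2)$ near $\lambda$ would more than compensate, though with only $w \in C^1$ and $w(\lambda) = w'(\lambda) = 0$ one gets $w(y) = o(|y-\lambda|)$, which combined with the $K'(z) = o(1/z)$-type bound from Lemma~\ref{behaviour of K at origin} suffices); and confirming that the endpoint contributions genuinely cancel using the oddness of $w$ about $\lambda$ and $2\pi$-periodicity. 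Once differentiation under the integral is legitimate, the sign argument is immediate from the monotonicity of $K$ on a half-period.
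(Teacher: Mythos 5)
Your argument lands on exactly the same quantity as the paper's proof, namely
\[
(Lw)'(\lambda)=-2\int_{\lambda}^{\lambda+\pi}K'(y-\lambda)\,w(y)\,dy,
\]
and the concluding sign argument via the monotonicity of $K$ on a half-period is the intended one. The contradiction framing is fine, with one small omission: ruling out $w'(\lambda)=0$ does not by itself give $w'(\lambda)>0$; you also need $w'(\lambda)\ge 0$, which is immediate since $w\ge 0$ on $[\lambda,\lambda+\pi]$, $w\le 0$ on $[\lambda-\pi,\lambda]$ and $w(\lambda)=0$, but it should be stated.

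The genuine weak point is the step you yourself flag: differentiating the kernel under the integral sign. You justify it by invoking a ``$K'(z)=o(1/z)$-type bound from Lemma~\ref{behaviour of K at origin}'', but that lemma controls $K$, not $K'$: it says $\sin(x)K(x)\in L^\infty(\T)$ and $xK(x)\to 0$, and gives no information about the derivative. In fact $K'$ can be much worse than $o(1/z)$ near the origin (for $m(k)=|k|^{r}$ with $r\in(-1,0)$ one has $K(x)\eqsim|x|^{-1-r}$, hence $K'(x)\eqsim|x|^{-2-r}$), and in a dominated-convergence argument for the difference quotients the intermediate point produced by the mean value theorem can collide with the singularity at $y=\lambda$. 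The paper sidesteps this entirely: since $w\in C^1$ one has $(K*w)'=K*w'$, the convolution is folded onto $[\lambda,\lambda+\pi]$ using that $w'$ is even about $\lambda$, and one then integrates by parts on $[\lambda+\e,\lambda+\pi]$; the boundary term $K(\e)w(\lambda+\e)$ vanishes precisely by Lemma~\ref{behaviour of K at origin} combined with $w(\lambda+\e)=O(\e)$, and the singular integral of $K'w$ appears only as the limit of the cutoff integrals, whose existence is guaranteed by the identity itself. You should replace the direct differentiation by this integration-by-parts route (or else supply an honest bound on $K'$ near the origin, which the paper never establishes and which your cited lemma does not provide).
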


\begin{proof} If $\phi$ and $\bar \phi$ are two solutions of \eqref{eq:new}, then
\[
c(\phi-\bar \phi)(x) = K*(\phi-\bar \phi)(x)+\phi^2(x)-\bar \phi^2(x).
\]
Taking the derivative at $x=\lambda$ yields
\begin{equation}\label{eq:bnd}
[c-(\phi+\bar \phi)(\lambda)](\phi-\bar \phi)^\prime(\lambda)=K*(\phi-\bar{\phi})^\prime(\lambda).
\end{equation}
Set $w=\phi-\bar \phi$ and consider the convolution $K*w^\prime (\lambda)$. Since $w^\prime$ is symmetric with respect to $\lambda$ and $K$ is even, we deduce that
\begin{align*}
K*w^\prime(\lambda)&=\int_{\lambda-\pi }^{\lambda+\pi }K(\lambda-y)w^\prime(y)\, dy=2\int_{\lambda}^{\lambda+\pi }K(\lambda-y)w^\prime(y)\, dy.
\end{align*}
Using that $K$ is smooth on $(-\pi ,\pi )$ away from the origin, we integrate by parts on the interval $[\lambda+\e,\lambda+\pi ]$ and obtain that
\begin{align*}
K*w^\prime(\lambda)&= 2\left( \int_{\lambda}^{\lambda+\e}K(\lambda-y)w^\prime(y)\, dy +[K(\lambda-y)w(y)]_{y=\lambda+\e}^{\lambda+\pi }+ \int_{\lambda+\e}^{\lambda+\pi } K^\prime(\lambda-y)w(y)\, dy\right)
\end{align*}
Because $w^\prime$ is continuous and $K$ is integrable, the first integral on the right-hand side vanishes as $\e \to 0$.
Due to the regularity and symmetry of $w$, we have $w(\lambda +\e)=O(\e)$.  Moreover $K(\e)=o(\e^{-1})$ by Lemma \ref{behaviour of K at origin} so that the boundary term 
\[
[K(\lambda-y)w(y)]_{y=\lambda+\e}^{\lambda+\pi }=K(\pi )w(\lambda+\pi )-K(\e)w(\lambda+\e)\to 0
\] 
as $\e\to 0$, where we used that $w(\lambda+\pi )=w(\lambda)=0$. Hence
\begin{align*}
K*w^\prime(\lambda)=2\lim_{\e \to 0} \int_{\lambda+\e}^{\lambda+\pi } K^\prime(\lambda-y)w(y)\, dy.
\end{align*}
In view of $w \geq 0$ on $[\lambda,\lambda+\pi ]$ and $K$ being increasing on the half-period $(-\pi ,0) $, we arrive at
\[
K*w^\prime(\lambda)=2\lim_{\e \to 0} \int_{\lambda+\e}^{\lambda+\pi } K^\prime(\lambda-y)w(y)\, dy>0,
\]
unless $\phi = \bar \phi$. Now \eqref{eq:bnd} implies that
\[
(\phi-\bar \phi)^\prime(\lambda)>0.
\]
\end{proof}

\begin{remark}
\normalfont
The proof of the following theorems rely on a weak form of the method of moving planes, which we apply in a nonlocal setting. 
Due to the periodicity of the solution and therefore the lack of decay at infinity, we impose the aforementioned reflection criterion
in order to guarantee that the method of moving planes can be started at some point $x\in \T$. 
Notice that whenever the wave profile is a priori assumed to be \emph{monotone} in the sense that it has only one crest per period, our assumption is satisfied. 

\begin{center}
\begin{figure} [h!]
\begin{tikzpicture}[scale=0.7]
\draw (-2,0)--(8,0);
\draw[-,black](0,0.1)--(0,-0.1) node[below] {$-\pi$};
\draw[-,black, gray](2.5,0.1)--(2.5,-0.1) node[below] {$\lambda_*$};
\draw[-,dashed, gray] (2.5,0)--(2.5,1.5);
\draw[-,black](6,0.1)--(6,-0.1) node[below] {$\pi$};
\draw[-,black](3,0.1)--(3,-0.1) node[below] {$0$};
\draw[luh-dark-blue, very thick] plot [smooth] coordinates {(-1,1) (0,0.5)  (4,2)  (6,0.5) (7,0.8)};
\begin{scope}[xscale=-1]
  \draw[gray, dashed, xshift=-5cm] plot [smooth] coordinates {(-1,1) (0,0.5)  (4,2)  (6,0.5) (7,0.8)}; 
\end{scope}
\end{tikzpicture}
\qquad
\begin{tikzpicture}[scale=0.65]
\draw (-2,0)--(8,0);
\draw[-,black](0,0.1)--(0,-0.1) node[below] {$-\pi$};
\draw[-,black](6,0.1)--(6,-0.1) node[below] {$\pi$};
\draw[-,black](3,0.1)--(3,-0.1) node[below] {$0$};
\draw[luh-dark-blue, very thick] plot [smooth] coordinates {(-1,1.3) (0,0.5) (0.8,1) (1.3,0.8) (2.1,1.4) (2.5, 1.8) (3.2, 1.6) (5.1,1.4)  (6,0.5) (6.8,0.8)};
\begin{scope}[xscale=-1]
  \draw[gray, dashed, xshift=-4.5cm] plot [smooth] coordinates {(-1,1.3) (0,0.5) (0.8,1) (1.3,0.8) (2.1,1.4) (2.5, 1.8) (3.2, 1.6) (5.1,1.4)  (6,0.5) }; 
\end{scope}
\draw[-,black, gray](2.27,0.1)--(2.27,-0.1) node[below] {$\lambda_*$};
\draw[-,dashed, gray] (2.27,0)--(2.27,1.5);
\end{tikzpicture}
\caption{Any monotone profile satisfies reflection criterion (left). 
The illustration of a nonmonotone wave profile satisfying  reflection criterion (right). The gray dashed curves are the reflections about the axis $x=\lambda_*$.}\label{F:profile}
\end{figure}
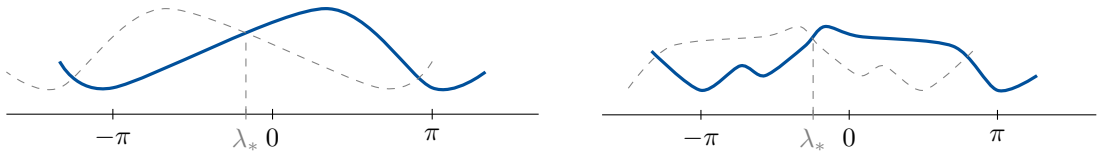
\end{center}
\end{remark}

\begin{thm}[Symmetry of traveling waves]\label{thm:Snew}
Let $\phi<\frac{c}{2}$ be a $2\pi$-periodic, bounded solution of \eqref{eq:new}, and satisfies the reflection criterion. 
Then $\phi$ is symmetric and has exactly one crest per period. Moreover,
\[
\phi^\prime(x)>0\qquad \mbox{for all}\qquad x\in (-\pi,0),
\]
after a suitable translation.
\end{thm}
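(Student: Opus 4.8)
The plan is to run a periodic, nonlocal version of the \emph{method of moving planes}, using Lemma \ref{touching lemma} and Lemma \ref{boundary lemma} as substitutes for the elliptic strong maximum principle and Hopf's boundary point lemma. Since $\phi<\tfrac c2$, Proposition \ref{prop:reg} yields $\phi\in C^\infty(\T)$, so both lemmas apply. After a translation we may assume $\lambda_*=0$, so that $\phi(x)>\phi(-x)$ for $x\in(0,\pi)$. For $\lambda\in\R$ set $\phi_\lambda(x):=\phi(2\lambda-x)$ and $w_\lambda:=\phi-\phi_\lambda$; since $K$ is even, $\phi_\lambda$ again solves \eqref{eq:new}, $w_\lambda$ is odd about $\lambda$, and $w_\lambda'(\lambda)=2\phi'(\lambda)$, $w_\lambda'(\lambda+\pi)=2\phi'(\lambda+\pi)$. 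Crucially, $\phi<\tfrac c2$ forces $\phi+\phi_\lambda<c$ everywhere, so the second alternative in the touching lemma always reads ``$w_\lambda>0$ on $(\lambda,\lambda+\pi)$''.

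I would then move the plane upward. Put
\[
\bar\lambda:=\sup\bigl\{\lambda\in[0,\pi]\ :\ w_\mu\geq0\ \text{on}\ [\mu,\mu+\pi]\ \text{for all}\ \mu\in[0,\lambda]\bigr\}.
\]
The reflection criterion, together with $w_0(0)=w_0(\pi)=0$, shows $0$ lies in this set; on the other hand $\phi_\pi=\phi_0$, so $w_\pi=w_0$ is negative on the corresponding half-period by the reflection criterion and the oddness of $w_0$, whence $\bar\lambda\in[0,\pi)$. By continuity $w_{\bar\lambda}\geq0$ on $[\bar\lambda,\bar\lambda+\pi]$. If $\phi$ were not symmetric about $\bar\lambda$, Lemma \ref{touching lemma} would give $w_{\bar\lambda}>0$ on the open half-period, and Lemma \ref{boundary lemma}, applied at both endpoints, would give $\phi'(\bar\lambda)>0$ and $\phi'(\bar\lambda+\pi)<0$. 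Since $\phi\in C^1$, these strict signs persist in neighbourhoods, and a continuation argument — using the monotonicity of $\phi$ near the two endpoints to control $w_\lambda$ there, and uniform continuity of $(\lambda,x)\mapsto w_\lambda(x)$ together with $w_{\bar\lambda}>0$ on compact subsets of $(\bar\lambda,\bar\lambda+\pi)$ to control it in the interior — would show $w_\lambda\geq0$ on $[\lambda,\lambda+\pi]$ for all $\lambda\in[\bar\lambda,\bar\lambda+\eta]$ with some $\eta>0$, contradicting the choice of $\bar\lambda$. Hence $\phi$ is symmetric about $\bar\lambda$, and $\bar\lambda\in(0,\pi)$ because the reflection criterion rules out symmetry about $0$.

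Next I would fill out the remaining reflection planes. Being symmetric about $\bar\lambda$ and $2\pi$-periodic, $\phi$ is symmetric about every point of $\bar\lambda+\pi\Z$; and if it were symmetric about some $\nu$ with $\nu-\bar\lambda\notin\pi\Z$, composing the two reflections would make $\phi$ have a period in $(0,2\pi)$, forcing (by a rational/irrational dichotomy) either $\phi$ constant or $\phi$ of period $2\pi/q$ with $q\geq2$ — both incompatible with the reflection criterion. So $\bar\lambda+\pi\Z$ are the only symmetry axes. Repeating the previous paragraph with $\lambda$ decreasing from $0$, the process can stop only at a symmetry axis and hence at $\bar\lambda-\pi$, giving $w_\mu\geq0$ on $[\mu,\mu+\pi]$ for all $\mu\in[\bar\lambda-\pi,0]$ as well. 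Combining the two sweeps, $w_\mu\geq0$ on $[\mu,\mu+\pi]$ for every $\mu\in[\bar\lambda-\pi,\bar\lambda]$, and for each $\mu$ in the open interval $(\bar\lambda-\pi,\bar\lambda)$ — none of which is a symmetry axis — Lemma \ref{boundary lemma} forces $w_\mu'(\mu)=2\phi'(\mu)>0$. Thus $\phi'>0$ on $(\bar\lambda-\pi,\bar\lambda)$, and by symmetry about $\bar\lambda$ also $\phi'<0$ on $(\bar\lambda,\bar\lambda+\pi)$, so $\phi$ has exactly one crest (at $\bar\lambda$) and one trough (at $\bar\lambda\pm\pi$) per period; translating so that $\bar\lambda=0$ gives the stated normalisation.

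The main obstacle is the continuation step in the second paragraph: one must propagate the inequality $w_\lambda\geq0$ strictly past $\bar\lambda$, which requires a careful quantitative interplay between the strict interior positivity from the touching lemma, the strict endpoint derivatives from the boundary point lemma, and uniform continuity in the plane parameter — with the extra bookkeeping that near the right endpoint the reflected point $2\lambda-x$ wraps around the half-period, so one must invoke the monotonicity of $\phi$ on a two-sided neighbourhood of $\bar\lambda+\pi$. A secondary but essential point is that a single sweep only covers reflection planes on one side of $0$, so the downward sweep — legitimate precisely because the reflection criterion excludes spurious symmetry axes — is indispensable for obtaining $\phi'>0$ on an entire half-period and hence the ``single crest'' conclusion.
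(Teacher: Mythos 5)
Your proof is correct and rests on the same two pillars as the paper's argument --- the touching lemma (Lemma \ref{touching lemma}) and the boundary point lemma (Lemma \ref{boundary lemma}) --- but you close the moving-plane scheme differently. The paper normalises a global trough to $-\pi$, moves the plane from $\lambda_*$ towards $0$, and terminates with a case analysis at the stopping point $\lambda_0$ (interior touching point, crest at $\lambda_0$, or trough at $\lambda_0+\pi$), each case being excluded by one of the two lemmas; it then obtains $\phi'>0$ on $(-\pi,0)$ by a separate direct computation with the kernel $K$, in the spirit of the proof of Lemma \ref{touching lemma}. You instead run the textbook continuation scheme: define $\bar\lambda$ as the supremum of admissible planes, show that a non-symmetric critical plane can always be pushed further (strict interior positivity from Lemma \ref{touching lemma}, strict endpoint derivatives $\phi'(\bar\lambda)>0$, $\phi'(\bar\lambda+\pi)<0$ from Lemma \ref{boundary lemma}, plus uniform continuity of $(\lambda,x)\mapsto w_\lambda(x)$), classify the symmetry axes as $\bar\lambda+\pi\Z$ via the composition-of-reflections/periodicity argument, run a second downward sweep, and then read off $\phi'(\mu)>0$ for every intermediate $\mu$ directly from the boundary point lemma. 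Your route makes explicit why the process must terminate exactly at a symmetry axis (the paper's assertion that the sweep ``stops at or before the first crest'' is left informal), and it yields the strict monotonicity as a by-product of the sweep rather than by a second kernel computation; the price is the continuation step, which you rightly flag as the delicate point and whose endpoint analysis (including the wrap-around of $2\lambda-x$ at $\lambda+\pi$) you sketch adequately. One small caveat: at the right endpoint you apply Lemma \ref{boundary lemma} to conclude $\phi'(\bar\lambda+\pi)<0$, which is strictly speaking a sign-adjusted variant of the stated lemma (using $w_\lambda\leq 0$ on $[\lambda+\pi,\lambda+2\pi]$ and $K'>0$ on $(-\pi,0)$); this mirrors exactly what the paper does in its third case, so it is legitimate, but it deserves a line of justification.
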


\begin{proof}
Since $\phi<\frac{c}{2}$, Proposition \ref{prop:reg} implies that $\phi$ is a smooth solution.
In order to prove the theorem, we suppose that $\phi$ is not symmetric. After possible translation, we may assume that a global minimum (trough) of the periodic wave is located at $x=-\pi$. Then $\lambda_*\in (-\pi,0]$. Indeed, if $\lambda_*\in (0,\pi]$, then the assumption
\[
	\phi(x)>\phi(2\lambda_*-x)\qquad \mbox{for all}\quad x \in (\lambda_*, \lambda_*+\pi)
\] 
would yield the contradiction $\phi(-\pi)=\phi(\pi)>\phi(2\lambda_*-\pi)$, since the global minimum of $\phi$ is attended at $x=-\pi$. Let $w_\lambda:\R\rightarrow \R$ be the \emph{reflection function} around $\lambda$ given by
\[w_\lambda(x):= \phi(x)-\phi(2\lambda-x).\] 
 Due to the symmetry of $K$ the function $\phi(2\lambda-\cdot)$ is a solution of \eqref{eq:new} whenever $\phi$ is a solution. 
Set 
	\begin{equation}\label{eq:lambda}\lambda_0:= \sup \{\lambda \in [\lambda_*,0 ] \mid w_\lambda(x) > 0 \; \mbox{ for all } \; x\in (\lambda,\lambda+\pi )\}.
	\end{equation}
	Notice that such $\lambda_0\geq \lambda_*$ exists, because by assumption
	\[
		w_{\lambda_*}(x)=\phi(x)-\phi(2\lambda_*-x)>0 \qquad \mbox{for all}\qquad x\in(\lambda_*,\lambda_*+\pi).
	\]
	We have that $w_{\lambda_0}$ satisfies
		\begin{itemize}
		\item[i)] $w_{\lambda_0}({\lambda_0})=0$; 
		\item[ii)] $w_{\lambda_0}$ is odd with respect to $\lambda_0$, that is $w_{\lambda_0}(\cdot)=-w_{\lambda_0}(2\lambda_0-\cdot)$;
		\item[iii)] $w_{\lambda_0}\geq 0$ in $[\lambda_0, \lambda_0+\pi ]$ and $w_{\lambda_0}\leq 0$ in $[\lambda_0+\pi , \lambda_0+2\pi ]$.
		\end{itemize}
Let us consider $w_\lambda$ for $\lambda\geq \lambda_*$. Starting at $\lambda=\lambda_*$, we move the plane $\lambda$ about which the wave profile is reflected forward as long as $w_{\lambda}>0$ on $(\lambda,\lambda+\pi )$. Clearly, this process stops at or before the first crest in $[\lambda_*,0]$ at $\lambda=\lambda_0$. In fact one of three occasions will occur (cf. Figure \ref{F:Alternatives}): Either there exists $\bar x \in (\lambda_0,\lambda_0+\pi )$ such that $w_{\lambda_0}(\bar x)=0$ as i.e., in (a); or we reach a crest at $x=\lambda_0$ as i.e., in (b); or we reach a  trough at $\lambda_0+\pi $ as i.e.,  in (c). 
\begin{center}
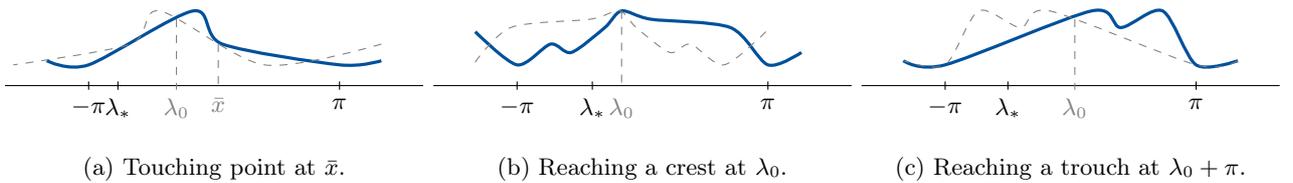
\begin{figure}[h!]
\begin{tikzpicture}[scale=0.55]
\small
\draw (-2,0)--(8,0);
\draw[-,black](0,0.1)--(0,-0.1) node[below] {$-\pi$};
\draw[-,black](6,0.1)--(6,-0.1) node[below] {$\pi$};
\draw[luh-dark-blue, very thick] plot [smooth] coordinates { (-1,0.6)(0,0.5) (2.5,1.8) (3.2,1) (6,0.5) (7,0.6) };
\begin{scope}[xscale=-1]
  \draw[gray, dashed, xshift=-4.2cm] plot [smooth] coordinates {(-2.8,1) (0,0.5) (2.5,1.8) (3.2,1) (6,0.5) }; 
\end{scope}
\draw[-,black, black](0.7,0.1)--(0.7,-0.1) node[below] {$\lambda_*$};
\draw[-,dashed, gray] (2.1,1.6)--(2.1,-0.1)node[below] {$\lambda_0$};
\draw[-,dashed, gray] (3.1,1)--(3.1,-0.1)node[below] {$\bar x$};
\node at (3,-2) {(a) Touching point at $\bar x$.};
\end{tikzpicture}
\begin{tikzpicture}[scale=0.55]
\small
\draw (-2,0)--(8,0);
\draw[-,black](0,0.1)--(0,-0.1) node[below] {$-\pi$};
\draw[-,black](6,0.1)--(6,-0.1) node[below] {$\pi$};
\draw[luh-dark-blue, very thick] plot [smooth] coordinates {(-1,1.3) (0,0.5) (0.8,1) (1.3,0.8) (2.1,1.4) (2.5, 1.8) (3.2, 1.6) (5.1,1.4)  (6,0.5) (6.8,0.8)};
\begin{scope}[xscale=-1]
  \draw[gray, dashed, xshift=-4.95cm] plot [smooth] coordinates {(-1,1.3) (0,0.5) (0.8,1) (1.3,0.8) (2.1,1.4) (2.5, 1.8) (3.2, 1.6) (5.1,1.4)  (6,0.5) }; 
\end{scope}
\draw[-,black, black](1.8,0.1)--(1.8,-0.1) node[below] {$\lambda_*$};
\draw[-,dashed, gray] (2.5,1.8)--(2.5,-0.1)node[below] {$\lambda_0$};
\node at (3,-2) {(b) Reaching a crest at $\lambda_0$.};
\end{tikzpicture}
\begin{tikzpicture}[scale=0.55]
\small
\draw (-2,0)--(8,0);
\draw[-,black](0,0.1)--(0,-0.1) node[below] {$-\pi$};
\draw[-,black](6,0.1)--(6,-0.1) node[below] {$\pi$};
\draw[luh-dark-blue, very thick] plot [smooth] coordinates { (-1,0.6)(0,0.5) (3.5,1.8) (4.2,1.4)  (5.2, 1.8)(6,0.5) (7,0.6) };
\begin{scope}[xscale=-1]
  \draw[gray, dashed, xshift=-6cm] plot [smooth] coordinates { (-1,0.6)(0,0.5) (3.5,1.8) (4.2,1.4)  (5.2, 1.8)(6,0.5) (7,0.6) }; 
\end{scope}
\draw[-,black, black](1.5,0.1)--(1.5,-0.1) node[below] {$\lambda_*$};
\draw[-,dashed, gray] (3.1,1.6)--(3.1,-0.1)node[below] {$\lambda_0$};
\node at (3,-2) {(c) Reaching a trouch at $\lambda_0+\pi$.};
\end{tikzpicture}
\caption{Exemplary illustrations for the method of moving planes. Here, $\lambda_*$ represents the reflection point due to the reflection criterion and $\lambda_0$ is as in \eqref{eq:lambda}. }\label{F:Alternatives}
\end{figure}
\end{center}

The first case can be excluded by the touching lemma. If on the other hand we reach a crest at $x=\lambda_0$ and $w_{\lambda_0}>0$ on $(\lambda_0,\lambda_0+\pi )$, then the boundary lemma  implies that
\[
	\phi^\prime(\lambda_0)>0,
\]
which is a contradiction to $\phi$ being continuously differentiable and having a crest at $x=\lambda_0$. If we reach a  trough at $\lambda_0+\pi$, then either $\phi$ touches $\bar \phi$ at  $\lambda_0+\pi$ or $w_{\lambda}$ changes sign on different sides  of  $\lambda_0+\pi$. The former can be excluded by the touching lemma while the latter can be dealt with by applying the boundary point lemma at $\lambda_0+\pi$ with corresponding adjustments in view of $w_{\lambda}\leq 0$ on $[\lambda_0+\pi, \lambda_0+2\pi]$.
We conclude that $\phi$ is symmetric. The fact that $\phi$ has exactly one crest per period follows essentially by the same argument. Repeating the method of moving plane for $\lambda\geq \lambda_*$ implies that there does not exist a crest in $[\lambda_*,0)$. To show that there does not exist a crest in $(-\pi,\lambda_*]$, we can apply the same method by moving $\lambda \leq \lambda_*$ towards $-\pi$ as long as $w_\lambda <0$ on  $(\lambda-\pi,\lambda)$. This process stops at or before the first trough in $(-\pi,\lambda_*]$ and the same argument as before yields a contradiction to the assumption that $\phi$ has a crest in $(-\pi,\lambda_*]$.
We deduce that $\phi$ is symmetric and has exactly one crest per period. By translation, we may assume that the crest is located at $x=0$.
 In particular, $\phi^\prime(x)\geq 0$ for all $x\in [-\pi,0]$. We are left to show that the strict inequality prevails for any $x\in (-\pi,0)$. Equation \eqref{eq:new} can be written as
\[
 B_h+\frac{c^2}{4}-\left(\frac{c}{2}-\phi \right)^2 =L\phi.
\]
Let $x\in (-\pi,0)$, then
\[
2\left(\frac{c}{2}-\phi \right) \phi^\prime(x)=-\left(L\phi\right)^\prime(x)=-\int_{-\pi}^0 \left[ K(x-y)-K(x+y) \right]\phi^\prime(y)\,dy.
\]
Here we used the symmetry of $K$ and $\phi$. In the same fashion as in the proof of the touching lemma (cf. Lemma \ref{touching lemma}), one can show that the right-hand side is strictly positive, unless $\phi$ is a trivial solution. But this is already excluded by our assumption on the wave profile.
\end{proof}

The proof above relies not only on the touching lemma, but also on Lemma \ref{boundary lemma}, which requires continuously differentiable solutions. If $\phi$ is a highest wave, that is $\max_{x\in\T} \phi(x)= \frac{c}{2}$, then the differentiability of $\phi$ is no longer guaranteed, see Proposition \ref{prop:C1}. However, assuming in addition to the refection criterion, that the highest wave $\phi$ has a \emph{unique} global maximum with height $\frac{c}{2}$ per period, we prove that $\phi$ is symmetric and has a monotone profile.

\begin{thm}[Symmetry of highest waves]\label{thm:S}
Let $\phi\leq \frac{c}{2}$ be a $2\pi $-periodic, bounded solution with $\max_{x\in \T} \phi =\frac{c}{2}$. Assume that $\phi$ has a unique global  maximum in $\T$ and satisfies the reflection criterion.
Then $\phi$ is symmetric and has exactly one crest per period. Moreover,
\[
\phi^\prime(x)>0\qquad \mbox{for all}\qquad x\in (-\pi,0),
\]
after a suitable translation.
\end{thm}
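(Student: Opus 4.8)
The plan is to run the weak moving-planes scheme of Theorem~\ref{thm:Snew}, the only genuinely new feature being the unique point $x_c\in\T$ with $\phi(x_c)=\frac c2$: by Proposition~\ref{prop:C1} the profile need not be $C^1$ there, so the boundary point lemma (Lemma~\ref{boundary lemma}) is not available at $x_c$, whereas by Proposition~\ref{prop:reg} $\phi$ is $C^\infty$ on $\T\setminus\{x_c\}$. The guiding idea is that $x_c$ should only ever arise as the \emph{endpoint} of a sweep, not as a genuine halting obstruction, since a $2\pi$-periodic profile that is symmetric with a single crest is necessarily symmetric about the line through that crest.

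First I would normalize exactly as in Theorem~\ref{thm:Snew}: after a translation, place a global minimum at $x=-\pi$, so that the reflection point $\lambda_*$ lies in $(-\pi,0]$. Put $w_\lambda:=\phi-\phi(2\lambda-\cdot)$, which is again a solution of \eqref{eq:new} by the evenness of $K$ (Theorem~\ref{thm:P}), and define $\lambda_0$ by \eqref{eq:lambda}, so that $w_{\lambda_0}$ has properties (i)--(iii). Assume for contradiction that $\phi$ is not symmetric, and examine how the sweep can halt. The touching lemma (Lemma~\ref{touching lemma}) needs only boundedness, so it still rules out an interior touching point and the touching alternative at $\lambda_0+\pi$, in which cases it forces $\phi$ to be symmetric about $\lambda_0$ --- contradicting the standing assumption. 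If the sweep reaches a crest at $\lambda_0$ or a trough at $\lambda_0+\pi$ with $w_{\lambda_0}>0$ on the open half-period, and that extremum is \emph{not} $x_c$, then $\phi\in C^1$ near it; since the proof of Lemma~\ref{boundary lemma} uses only the regularity of $\phi$ near the endpoint in question together with Lemma~\ref{behaviour of K at origin}, the boundary point lemma applies there and produces a non-vanishing one-sided derivative of $\phi$ at that point, contradicting that it is a crest (resp.\ trough). There remains the single possibility $\lambda_0=x_c$.

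The case $\lambda_0=x_c$ is the crux and the step I expect to cause the main trouble: the boundary point lemma genuinely breaks down at $x_c$, because when $\phi$ has a cusp there $w_{\lambda_0}'$ need not be integrable against $K$ near $x_c$, so the integration by parts in the proof of Lemma~\ref{boundary lemma} cannot be carried out at $\lambda_0$. Here I would exploit the uniqueness of $x_c$ together with the structure of the equation at the crest: the touching lemma first reduces this case to $w_{\lambda_0}>0$ on $(\lambda_0,\lambda_0+\pi)$ and $\phi(x)+\phi(2\lambda_0-x)<c$ there, and then one analyses $w_{\lambda_0}$ near $x_c$ via the identity $(c-\phi-\phi(2\lambda_0-\cdot))\,w_{\lambda_0}=Lw_{\lambda_0}$, the explicit representation of $K$ from Theorem~\ref{thm:P}, and the bound $xK(x)\to0$ of Lemma~\ref{behaviour of K at origin}, to derive a contradiction (equivalently, to force $w_{\lambda_0}\equiv0$).

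With all cases excluded we conclude that $\phi$ is symmetric, and uniqueness of $x_c$ then forces the symmetry axis to pass through $x_c$. Finally, the assertions that $\phi$ has exactly one crest per period and that $\phi'>0$ on $(-\pi,0)$ after translation follow as in Theorem~\ref{thm:Snew}, since there the moving-planes argument is re-run only on intervals contained in $\{\phi<\frac c2\}$, on which $\phi$ is smooth and Lemma~\ref{boundary lemma} is fully applicable.
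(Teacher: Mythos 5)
Your overall framework (moving planes, touching lemma at interior points, boundary point lemma at smooth halting extrema) matches the paper's, and you have correctly located the difficulty: the sweep halting at the unique singular crest $x_c$ where $\phi=\tfrac c2$. But that case is exactly where your proposal has a genuine gap. You write that one would ``analyse $w_{\lambda_0}$ near $x_c$ via the identity $(c-\phi-\phi(2\lambda_0-\cdot))\,w_{\lambda_0}=Lw_{\lambda_0}$, the explicit representation of $K$ \ldots to derive a contradiction'', but no such analysis is carried out, and it is not at all clear that it can be: near a cusp $w_{\lambda_0}'$ may fail to be integrable against $K$, the factor $c-\phi-\phi(2\lambda_0-\cdot)$ degenerates at the crest, and the paper itself remarks (in the introduction, referring to the solitary case of \cite{BEP16}) that the symmetry argument \emph{fails} for the highest wave unless the highest crest is unique --- a strong hint that the authors do not know how to push the boundary-point argument through at the singular point either. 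Since this is the only new case compared with Theorem~\ref{thm:Snew}, leaving it as an expectation rather than an argument means the theorem is not proved.

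The paper's resolution is different and simpler: it never confronts the case $\lambda_0=x_c$ at all. Using the \emph{uniqueness} of the global maximum, one may, after translating a global minimum to $-\pi$, additionally reflect the profile about $x=0$ so that the singular crest $x_1$ lies in $[0,\pi)$ while the reflection criterion (with a new $\bar\lambda_*\in(-\pi,0]$) is preserved. The sweep then takes place over $\lambda\in[\lambda_*,0]$ and halts, if at a crest, at some $\lambda_0<0$; since the unique point of height $\tfrac c2$ sits in $[0,\pi)$, that halting crest satisfies $\phi(\lambda_0)<\tfrac c2$, so $\phi$ is smooth there by Proposition~\ref{prop:reg} and Lemma~\ref{boundary lemma} applies verbatim. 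This normalization is the one place the uniqueness hypothesis is actually used, and it is the missing idea in your write-up. (Your concluding paragraph --- symmetry axis through $x_c$, single crest, and $\phi'>0$ on $(-\pi,0)$ via the convolution identity on the smooth half-period --- is consistent with the paper once the main case analysis is repaired.)
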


\begin{proof}
Suppose by contradiction that $\phi$ is not symmetric. After proper translation and reflection, we may assume that a global minimum is located at $x=-\pi$ and the unique global maximum at some point $x_1 \in [0,\pi)$ and  $\lambda_*\in (-\pi,0]$ (cf. Figure \ref{F:reflection}).
\begin{figure}[h!]
\begin{tikzpicture}[scale=0.55]
\small
\draw (-2,0)--(8,0);
\draw[-,black](0,0.1)--(0,-0.1) node[below] {$-\pi$};
\draw[-,black](6,0.1)--(6,-0.1) node[below] {$\pi$};
\draw[-,black](3,0.1)--(3,-0.1) node[below] {$0$};
\draw[luh-dark-blue, very thick] plot [smooth] coordinates { (-1,0.6)(0,0.5) (2.5,1.8)};
\draw[luh-dark-blue, very thick] plot [smooth] coordinates { (2.5,1.8)(3.2,1) (6,0.5) (7,0.6) } ;
\begin{scope}[xscale=-1]
  \draw[gray, dashed, xshift=-3.5cm] plot [smooth] coordinates {  (-2.8,0.9)(0,0.5) (2.5,1.8)};
  \draw[gray, dashed, xshift=-3.5cm] plot [smooth] coordinates { (2.5,1.8)(3.2,1) (5,0.6) } ;
\end{scope}
\draw[-,dashed] (1.75,1.4)--(1.75,-0.1)node[below] {$\lambda_*$};
\node[align=center] at (3,-2)  {\scriptsize{Wave profile with unique crest in $(-\pi,0]$}\\ \scriptsize{satisfying the reflection criterion with $\lambda_*$.}};
\draw[->] (10,1)--(13,1);
\node at (11.5,1.5) {\scriptsize {Reflection about $x=0$}};
\end{tikzpicture}
\qquad
\begin{tikzpicture}[scale=0.55]
\small
\draw (-2,0)--(8,0);
\draw[-,black](0,0.1)--(0,-0.1) node[below] {$-\pi$};
\draw[-,black](6,0.1)--(6,-0.1) node[below] {$\pi$};
\draw[-,black](3,0.1)--(3,-0.1) node[below] {$0$};
\draw[gray, dashed, xshift=-4cm] plot [smooth] coordinates { (3.2,1) (6,0.5) (8.5,1.8) } ;
\draw[gray, dashed, xshift=-4cm] plot [smooth] coordinates { (8.5,1.8)(9.2,1) (12,0.5)} ;
\begin{scope}[xscale=-1]
  \draw[orange, very thick,xshift=-6cm] plot [smooth] coordinates { (-1,0.6)(0,0.5) (2.5,1.8)};
  \draw[orange, very thick, xshift=-6cm] plot [smooth] coordinates{ (2.5,1.8)(3.2,1) (6,0.5) (7,0.6) } ;
\end{scope}
\draw[-,dashed] (1,0.6)--(1,-0.1)node[below] {$\bar \lambda_*$};
\node[align=center] at (3,-2)  {\scriptsize{Reflected wave profile with unique crest in $[0,\pi)$}\\ \scriptsize{satisfying the reflection criterion with $\bar \lambda_*$.}};
\end{tikzpicture}
\caption{By a possible refection about $x=0$, one can assume that the unique global maximum per period is located in $[0,\pi)$ and the reflection criterion stays valid.}\label{F:reflection}
\end{figure}
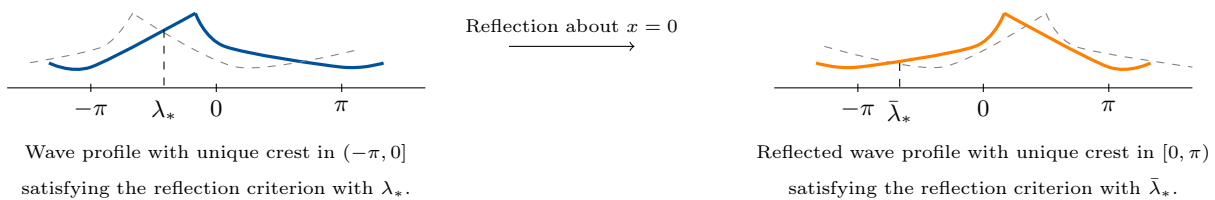

  Note that $\phi$ is smooth on $\R\setminus\{x_1+2\pi \Z\}$ due to Proposition~\ref{prop:reg}. As in the proof of Theorem \ref{thm:Snew}, let $w_\lambda:\R\rightarrow \R$ be the \emph{reflection function} around $\lambda$ given by
\[w_\lambda(x):= \phi(x)-\phi(2\lambda -x)\]
and recall that due to the symmetry of $K$ the function $\phi(2\lambda-\cdot)$ is a solution whenever $\phi$ is a solution. 
Again, set 
	\[\lambda_0:= \sup \{\lambda \in [\lambda_*,0 ] \mid w_\lambda(x) > 0 \; \mbox{ for all } \; x\in (\lambda,\lambda+\pi )\}.\]
The argument can be carried out in analog to the proof of Theorem \ref{thm:Snew}, since $\phi$ is smooth on $[-\pi,0)$.
Consider $w_\lambda$ for $\lambda\geq \lambda_*$. Starting at $x=\lambda_*$, we move the plane $\lambda$ about which the wave profile is reflected forward as long as $w_{\lambda}>0$ on $(\lambda,\lambda+\pi )$. Clearly, this process stops at or before the first crest in $[\lambda_*,0)$ at $\lambda=\lambda_0$ at $\lambda=\lambda_0$. In fact one of three occasions will occur: 
Either there exists $\bar x \in (\lambda_0,\lambda_0+\pi )$ such that $w_{\lambda_0}(\bar x)=0$; or we reach a crest at $x=\lambda_0<0$; or we reach a  trough at $\lambda_0+\pi $. The first case can be excluded by the touching lemma. If on the other hand we reach a crest at $x=\lambda_0<0 $ and $w_{\lambda_0}>0$ on $(\lambda_0,\lambda_0+\pi )$, then the boundary lemma  implies that
\[
	\phi^\prime(\lambda_0)>0,
\]
which is a contradiction to $\phi$ being continuously differentiable and having a crest at $x=\lambda_0<0 $ where $\phi(\lambda_0)<\frac{c}{2}$. If we reach a  trough at $\lambda_0+\pi$, then either $\phi$ touches $\bar \phi$ at  $\lambda_0+\pi$ or $w_{\lambda}$ changes sign on different sides  of  $\lambda_0+\pi$. The former can be excluded by the touching lemma while the latter can be dealt with by applying the boundary point lemma at $\lambda_0+\pi$ with a corresponding adjustment in view of  $w_{\lambda}\leq 0$ on $[\lambda_0+\pi,\lambda_0+2\pi]$. 
We conclude that $\phi$ is symmetric. By translation we can assume that $\phi$ is even. The fact that $\phi$ has a single crest per period and $\phi^\prime(x)> 0$ for all $x\in [-\pi,0)$ can be shown by the same argument as in the proof of Theorem \ref{thm:Snew}.  
\end{proof}

\bigskip

\subsection*{Acknowlegments}
The author G.B. gratefully acknowledges financial support by the Deutsche Forschungsgemeinschaft (DFG) through CRC 1173.

\bigskip

\bibliographystyle{siam}
\bibliography{Bib-Whitham}

	\end{document}